\newcommand{\pl}[1]{\foreignlanguage{polish}{#1}}
\theoremstyle{plain}
\newtheorem{theorem}{Theorem}
\newtheorem{proposition}{Proposition}[section]
\newtheorem{lemma}[proposition]{Lemma}
\theoremstyle{definition}
\newtheorem{definition}{Definition}[section]
\theoremstyle{remark}
\newcounter{thm}
\newcommand{\RR}{\mathbb{R}}
\newcommand{\ZZ}{\mathbb{Z}}
\newcommand{\TT}{\mathbb{T}}
\newcommand{\NN}{\mathbb{N}}
\newcommand{\Ss}{\mathbb{S}}
\newcommand{\PP}{\mathbb{P}}
\renewcommand{\atop}[2]{\substack{{#1}\\{#2}}}
\newcommand{\norm}[1]{{\left\lvert #1 \right\rvert}}
\newcommand{\abs}[1]{{\lvert {#1} \rvert}}
\newcommand{\calC}{\mathcal{C}}
\newcommand{\calF}{\mathcal{F}}
\newcommand{\calR}{\mathcal{R}}
\newcommand{\calL}{\mathcal{L}}
\newcommand{\calO}{\mathcal{O}}
\newcommand{\ind}[1]{{\mathds{1}_{{#1}}}}
\author{Ben Krause}
\address{UCLA Math Sciences Building\\
         Los Angeles CA 90095-1555}
\email{benkrause23@math.ucla.edu}
\author{Mariusz Mirek}
\address{Mariusz Mirek \\
	Universit\"{a}t Bonn \\
	Mathematical Institute\\
	Endenicher Allee 60\\
	D--53115 Bonn \\
	Germany \&
	Instytut Matematyczny\\
	Uniwersytet \pl{Wroc{\lll}awski}\\
	Pl. Grun\-waldzki 2/4\\
	50-384 \pl{Wroc{\lll}aw}\\
	Poland}
\email{mirek@math.uni-bonn.de}
\author{Bartosz Trojan}
\address{
	Bartosz Trojan\\
	Instytut Matematyczny\\
	Uniwersytet \pl{Wroc{\lll}awski}\\
	Pl. Grun\-waldzki 2/4\\
	50-384 \pl{Wroc{\lll}aw}\\
	Poland}
\email{trojan@math.uni.wroc.pl}
\title[On the Hardy--Littlewood majorant problem]
{On the Hardy--Littlewood majorant problem \\
 for arithmetic sets}
\begin{document}
\selectlanguage{english}

\begin{abstract}
  The aim of this paper is to exhibit a wide class of {sparse}
  deterministic sets, $\mathbf B \subseteq \mathbb{N}$, so that
\[ \limsup_{N \to \infty} N^{-1}|\mathbf B \cap [1,N]|= 0, \]
for which the Hardy--Littlewood majorant property holds:
\[ \sup_{|a_n|\le 1} \Big\| \sum_{n\in\mathbf B\cap[1, N]} a_n e^{2 \pi i n \xi}\Big
\|_{L^p(\mathbb{T}, {\rm d} \xi)} \leq \mathbf{C}_p \Big\|
\sum_{n\in\mathbf B\cap[1, N]} e^{2 \pi i n \xi} \Big\|_{L^p(\mathbb{T},
  {\rm d} \xi)},\] where $p \geq p_{\mathbf{B}}$ is sufficiently
large, the implicit constant $\mathbf{C}_p$ is independent of $N$, and
the supremum is taken over all complex sequences $ (a_n : n \in
\mathbb{N})$ such that $|a_n| \leq 1$.
\end{abstract}

\maketitle

\section{Introduction}
In 1937, Hardy and Littlewood \cite{HL} conjectured that for each $p \geq 2$ there
is a constant $\mathbf{C}_p > 0$ such that for every finite set $A \subset \NN$ and
every sequence $(a_n : n \in A)$ of complex numbers satisfying
$\sup_{n \in A} \abs{a_n} \leq 1$ we have
\begin{align}
	\label{eq:0}
	\Big\|
	\sum_{n\in A}
	a_n
	e^{2\pi i n\xi}
	\Big\|_{L^p(\TT, {\rm d}\xi)}
	\le
	\mathbf C_p
	\Big\|\sum_{n\in A}e^{2\pi i n\xi}\Big\|_{L^p(\TT, {\rm d}\xi)}.
\end{align}
This conjecture, known as the \emph{Hardy--Littlewood majorant problem}, was
suggested by a simple observation, based on Parseval's identity, which
implies that $\mathbf C_p=1$  for every even integer $p\ge2$. It was also noticed by Hardy and Littlewood
that $\mathbf C_3>1$. In 1962, Boas \cite{boas2} showed that $\mathbf
C_p>1$ for any
$p \not \in \{2k:k\in\NN\}$. Finally, in early seventies Bachelis
\cite{Bache} disproved the Hardy--Littlewood conjecture  showing unboundedness of
$\mathbf C_p$ for every $p \not \in\{2k:k\in\NN\}$ as $|A|\to\infty$.

Although inequality \eqref{eq:0} fails to hold in general, recently some attention has been
paid to quantify this failure. To do so, for $N \in \NN$ we consider
\[
	\mathbf C_p(N)=\sup_{A\subseteq\{1,\ldots,N\}}\mathbf C_p(A, N)
\]
where for $A \subseteq \{1,\ldots,N\}$ we have set
\[
	\mathbf C_p(A, N)
	=
	\sup_{|a_n| \le 1}
	\Big\|
	\sum_{n\in A}
	a_n e^{2\pi i n\xi}
	\Big\|_{L^p(\TT, {\rm d}\xi)}
	\cdot
	\Big\|
	\sum_{n\in A} e^{2\pi i n\xi}
	\Big\|_{L^p(\TT, {\rm d}\xi)}^{-1}.
\]
It was proven in \cite{Gerdhab} that for every $p \in (2, 4)$ there is a
constant $C>0$ such that
\[
	\log \mathbf C_p(N) \ge C \frac{\log N}{\log\log N}.
\]
Consequently, the Hardy--Littlewood majorant problem was reformulated
to a slightly weaker statement. Namely, it was conjectured that for every
$p \ge 2$ and $\varepsilon>0$ there is a constant $C_{p, \varepsilon}>0$
such that for every $N\in\NN$
\begin{align}
	\label{eq:6}
	\mathbf C_p(N)\le C_{p, \varepsilon}N^{\varepsilon}.
\end{align}
It is worth mentioning that \eqref{eq:6} implies the restriction conjecture
for the Fourier transform on $\RR^d$, i.e. that for every $p > 2d/(d-1)$ there
exists a constant $C_{p, d}>0$ such that
\begin{align}
	\label{eq:7}
	\big\|
	\widehat{f {\rm d}\sigma}
	\big\|_{L^p(\RR^d)}
	\le C_{p, d}
	\|f\|_{L^{\infty}(\Ss^{d-1}, {\rm d} \sigma)}
\end{align}
where $\sigma$ is the spherical measure on the unit sphere $\Ss^{d-1}$ in $\RR^d$.
In \cite{Gerdhab} it was stated that for suitable sets $A$ the inequality \eqref{eq:0}
may be treated as a restatement of \eqref{eq:7}. However, Mockenhaupt and Schlag \cite{MoSc}
disproved \eqref{eq:6} by showing that for all $p>2$ which is not an even integer,
there are constants $\eta>0$ and $C>0$ such that $\mathbf C_p(N) \ge C N^{\eta}$.
For $p = 3$ the same result was  obtained by Green and Ruzsa \cite{GR}.

In view of the restriction conjecture one may ask whether there are sets
$A \subseteq \{1,\ldots,N\}$ such that for every $p \ge 2$ and $\varepsilon>0$ there
exists a constant $C_{p, \varepsilon}>0$ for which we have
\begin{align}
	\label{eq:8}
	\mathbf C_p(A, N) \le C_{p, \varepsilon} N^{\varepsilon}.
\end{align}
The question above has been extensively studied by Mockenhaupt and Schlag in \cite{MoSc}
where the authors proved that for every $\varrho\in(0, 1)$ and $p\ge2$ there are
random sets $A\subseteq\{1,\ldots,N\}$ with cardinality $N^{\varrho}$ satisfying
\eqref{eq:8} with a large probability.

The Hardy--Littlewood majorant property plays an important role in combinatorial
problems. In \cite{G} Green used a variant of the inequality \eqref{eq:0} for the
set of prime numbers $\PP$ to deduce that every subset of $\PP$ with non-vanishing
relative upper-density contains at least one arithmetic progression of length three.
Specifically, Green proved that for every $p\ge2$ there is a constant $C_p>0$ such that
for all $N\in\NN$
\[
	\mathbf C_p(\PP_N, N)\le C_{p}
\]
where $\PP_N=\PP\cap[1, N]$, the set of primes less than or equal to $N$. Generally speaking, in problems of this kind it is
critical to know whether the majorant property \eqref{eq:0} holds for some $p\in(2, 3)$
with the uniform constant $\mathbf C_p$, independent of the cardinality of the set $A$
(see \cite{GT,VT}).

The present article is devoted to study a wide class of deterministic infinite sets
$A\subseteq \NN$ with vanishing Banach density, i.e.
\[
	\limsup_{N\to\infty}
	\frac{| A \cap [1,N] |}{N} = 0,
\]
and obeying the Hardy--Littlewood majorant property. In particular, we will be concerned
with the sets
\begin{align}
	\label{eq:3}
	\mathbf A=\big\{\lfloor h(n)\rfloor: n\in\NN\big\}
\end{align}
where $h$ is a regularly varying function of the form $h(x)=x \ell(x)$, for a suitably chosen
slowly varying function $\ell$, e.g.
\begin{align*}
	\ell(x)=(\log x)^B, \ \ \text{or} \ \
	\ell(x)=\exp\big({B (\log x)^C}\big), \ \ \text{or} \ \
	\ell(x)=l_m(x),
\end{align*}
where $B>0$, $C\in(0, 1)$, $l_1(x)=\log x$ and $l_{m+1}(x)=\log(l_m(x))$, for $m\in\NN$.
We show that for every $p \ge 2$ there exists a constant $C_p > 0$ such that
for every $N\in\NN$ we have
\[
	\mathbf C_p(\mathbf A_N, N)\le C_{p}
\]
where $\mathbf A_N=\mathbf A\cap[1, N]$. We also consider the sets \eqref{eq:3} with
\[
	h(x)=x^c \ell(x)
\]
for some $c>1$ sufficiently close to 1. In this case we show that it is possible to
find $p_c>2$ such that for every $p>p_c$ there exists a constant $C_{c, p}>0$ such that
for every $N\in\NN$
\[
	\mathbf C_p(\mathbf A_N, N)\le C_{c, p}.
\]
Moreover, $\lim_{c\to1} p_c=2$.

\subsection{Statement of the results}
\label{sec:2}
Before we precisely formulate the main results we need to introduce some definitions.
\begin{definition}
	\label{def:1}
	Let $\mathcal L$ be a family of slowly varying functions
	$\ell: [x_0, \infty) \rightarrow (0, \infty)$  such that
	\begin{align*}
		\ell(x)=\exp\Big(\int_{x_0}^x \frac{\vartheta (t)}{t} {\: \rm d} t \Big)
	\end{align*}
	where $\vartheta\in \calC^2([x_0, \infty))$ is a real function satisfying
	\[
		\lim_{x\to\infty}
		\vartheta(x)=0,\ \ \lim_{x\to\infty}x\vartheta'(x)=0,\ \
		\lim_{x\to\infty}x^2\vartheta''(x)=0.
	\]
\end{definition}
We also distinguish a subfamily $\calL_0$ of $\calL$.
\begin{definition}
	\label{def:2}
	Let $\calL_0$ be a family of  slowly varying functions
	$\ell:[x_0, \infty) \rightarrow (0, \infty)$ such that
	\begin{align*}
		\ell(x)=\exp\Big(\int_{x_0}^x\frac{\vartheta (t)}{t} {\: \rm d} t\Big)
	\end{align*}
	where  $\vartheta \in \calC^2([x_0, \infty))$ is positive decreasing real function satisfying
	\begin{align*}
		\lim_{x\to\infty} \vartheta(x)=0,\qquad
		\lim_{x\to\infty} \frac{x\vartheta'(x)}{\vartheta(x)}=0, \qquad
		\lim_{x\to\infty}\frac{x^2\vartheta''(x)}{\vartheta(x)}=0,
	\end{align*}
	and for every $\varepsilon>0$ there is a constant $C_{\varepsilon}>0$ such that
	$1\le C_{\varepsilon}\vartheta(x)x^{\varepsilon}$ and $\lim_{x\to\infty}\ell(x)=\infty$.
\end{definition}
Finally, we define the subfamily $\calR_c$ of regularly varying functions.
\begin{definition}
	\label{def:3}
	For every $c \in(0, 2) \setminus \{1\}$ let $\mathcal R_c$ be a family of
	increasing, convex, regularly-varying functions $h:[x_0, \infty) \rightarrow [1, \infty)$ of the form
	\[
		h(x)=x^c L(x)
	\]
	where $L \in \mathcal L$. If $c=1$ we impose that $L \in \mathcal L_0$.
\end{definition}
We fix two functions $h_1 \in \calR_{c_1}$ and $h_2 \in \calR_{c_2}$ for $c_1 \in [1, 2)$ and $c_2 \in [1, 6/5)$.
Let $\varphi_1$ and $\varphi_2$ be the inverse of $h_1$ and $h_2$, respectively. We consider
a function $\psi: [x_0, \infty) \rightarrow (0, \infty)$ such that for all $x \geq x_0$,
$\psi(x) \leq 1/2$ and
\begin{align}
	\label{eq:5}
	\lim_{x \to +\infty} \frac{\psi(x)}{\varphi_2'(x)} = 1, \quad
	\lim_{x \to +\infty} \frac{\psi'(x)}{\varphi_2''(x)} = 1, \quad
	\lim_{x \to +\infty} \frac{\psi''(x)}{\varphi_2'''(x)} = 1.
\end{align}
Finally, we define two sets
\[
	\mathbf B_+=\big\{n\in\NN: \{\varphi_1(n)\}<\psi(n) \big\},
	\quad
	\mathbf B_-=\big\{n\in\NN: \{-\varphi_1(n)\}<\psi(n) \big\}.
\]
Let us observe that if $h_1=h_2=h$ is the inverse function $\varphi$ and
$\psi(x)=\varphi(x+1)-\varphi(x)$ then $\mathbf B_-=\mathbf A$. Indeed, we have the following
chain of equivalences
\begin{align*}
	m \in \mathbf A &\iff m = \lfloor h(n) \rfloor \ \text{ for some } n\in\NN \\
	& \iff h(n)-1 < m \leq h(n) < m+1 \\
	& \iff \varphi(m) \leq n < \varphi(m+1), \; \; \text{ since $\varphi$ is well-defined and monotonically increasing} \\
	& \iff 0 \leq n- \varphi(m) < \varphi(m+1) - \varphi(m) = \psi(m) < 1/2 \\
	& \iff 0 \leq \{ - \varphi(m) \} < \psi(m) \\
	& \iff m \in \mathbf B_-.
\end{align*}
The main result of this paper is the following theorem.
\begin{theorem}
	\label{thm:1}
	Assume that $c_1\in[1, 2)$ and $c_2=1$. Then for every $p\ge 2$ there exists a constant
	$\mathbf C_p>0$ such that for every $N\in\NN$ and any sequence of complex numbers
	$(a_n: n\in\NN)$ satisfying $\sup_{n\in\NN}|a_n|\le1$ we have
	\begin{align}
		\label{eq:30}
		\Big\|
		\sum_{n\in \mathbf B_N^{\pm}}a_ne^{2\pi i n\xi}
		\Big\|_{L^p(\TT, {\rm d}\xi)}
		\le
		\mathbf C_p
		\Big\|
		\sum_{n\in \mathbf B_N^{\pm}}e^{2\pi i n\xi}
		\Big\|_{L^p(\TT, {\rm d}\xi)}
	\end{align}
	where $\mathbf B_N^{\pm}=\mathbf B_{\pm}\cap[1, N]$.
\end{theorem}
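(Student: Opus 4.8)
The plan is to establish the majorant property by proving a pointwise (or nearly pointwise) upper bound on the exponential sum $\sum_{n\in\mathbf B_N^{\pm}} a_n e^{2\pi i n\xi}$ and a matching lower bound on the $L^p$-norm of the "flat" sum $\sum_{n\in\mathbf B_N^{\pm}} e^{2\pi i n\xi}$, in such a way that the two are comparable up to a constant depending only on $p$. The starting point is the observation, already recorded in the excerpt, that $\mathbf B_-$ (and symmetrically $\mathbf B_+$) is essentially the image set $\mathbf A=\{\lfloor h(n)\rfloor\}$ when $h_1=h_2$ and $\psi$ is the consecutive difference of $\varphi$; in the general case $\mathbf B_\pm$ is a union of short intervals $\{n:\{\mp\varphi_1(n)\}<\psi(n)\}$ whose lengths and gaps are governed by the regularly varying profile. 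The key structural input is that, because $c_2=1$ and $L\in\mathcal L_0$, the function $\varphi_2$ (inverse of $h_2$) has derivative $\varphi_2'(x)=x^{-1}\vartheta$-type decay that is slowly varying; thus $\psi(n)\asymp \varphi_2'(n)$ decays, the set $\mathbf B_\pm$ is sparse, and — crucially — on dyadic blocks $n\sim 2^j$ the "defining" arithmetic behaviour is close to that of a genuine Beatty/Bohr set $\{n:\{\alpha n\}<\delta\}$ up to a controlled curvature error.

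**First I would** perform a dyadic decomposition $\mathbf B_N^\pm=\bigsqcup_j \mathbf B^\pm\cap[2^j,2^{j+1})$ and, on each block, carry out a van der Corput / exponential-sum analysis: write the indicator $\ind{\{\mp\varphi_1(n)\}<\psi(n)}$ via its Fourier expansion on $\TT$ (a Fejér-type smoothing of the sharp cutoff), producing $\sum_{k} \widehat{\chi}(k)\, e^{\mp 2\pi i k\varphi_1(n)}$ with rapidly decaying coefficients, and then estimate $\sum_{n\sim 2^j} a_n e^{2\pi i(n\xi \mp k\varphi_1(n))}$ by the second-derivative test, using the hypotheses $\lim x\vartheta'=\lim x^2\vartheta''=0$ from Definition~\ref{def:1} to guarantee that $\varphi_1''$ is of constant sign and of size $\asymp \varphi_1'/x$ on the block, so that Weyl/van der Corput gives square-root cancellation $O(2^{j/2})$ uniformly in the phase modulation. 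Summing over $k$ (geometric decay) and over dyadic blocks yields $\|\sum_{n\in\mathbf B_N^\pm} a_n e^{2\pi i n\xi}\|_\infty \lesssim \sum_j (2^{j/2}+\psi(2^j)2^j) \lesssim N^{1/2}+|\mathbf B_N^\pm|$, and more importantly a strong $L^p$ bound: for $p>2$ one interpolates the trivial $L^2$ identity $\|\cdot\|_2=|\mathbf B_N^\pm|^{1/2}$ against the $L^\infty$ bound, or better, one runs the argument in $L^p$ directly by controlling level sets $\{\xi: |\sum a_n e(n\xi)|>\lambda\}$ through the van der Corput estimate localized near rationals with small denominator.

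**The complementary lower bound** on $\|\sum_{n\in\mathbf B_N^\pm} e^{2\pi i n\xi}\|_{L^p}$ is obtained by restricting the integral to a small arc around $\xi=0$ (and, if needed, around major arcs $a/q$ with $q$ bounded): on $|\xi|\lesssim |\mathbf B_N^\pm|^{-1}$ the sum has size $\gtrsim |\mathbf B_N^\pm|$, giving $\|\sum e(n\xi)\|_p \gtrsim |\mathbf B_N^\pm|^{1-1/p}$; one must also verify the reverse that this flat sum is not much larger, i.e. $\|\sum e(n\xi)\|_p \asymp |\mathbf B_N^\pm|^{1-1/p}$, which again follows from the same van der Corput estimate applied with $a_n\equiv 1$ showing the minor-arc contribution is $O(N^{1/2})$ and hence negligible once $|\mathbf B_N^\pm|\gg N^{1/2}$ — this is exactly where the sparseness must not be \emph{too} severe, and where $c_2=1$ (so that $|\mathbf B_N^\pm|\asymp N\psi(N)$ with $\psi$ decaying only slowly, sub-polynomially) is used decisively: for $p\ge 2$ the bound $N^{1/2}\le C |\mathbf B_N^\pm|^{1-1/p}$ reduces to $N^{1/2}\lesssim (N\psi(N))^{1-1/p}$, valid for all $p\ge2$ precisely because $\psi(N)\gg N^{-\varepsilon}$ for every $\varepsilon$, guaranteed by the condition $1\le C_\varepsilon\vartheta(x)x^\varepsilon$ in Definition~\ref{def:2}. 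Combining, $\mathbf C_p(\mathbf B_N^\pm,N)\lesssim \big(N^{1/2}+|\mathbf B_N^\pm|\big)^{?}$ normalized against $|\mathbf B_N^\pm|^{1-1/p}$ collapses to a constant depending only on $p$.

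**The main obstacle**, as I see it, is making the van der Corput / circle-method analysis \emph{uniform in the coefficients} $a_n$ with $|a_n|\le 1$ while simultaneously \emph{uniform in the Fourier mode $k$} coming from the expansion of the sharp cutoff $\ind{\{\cdot\}<\psi(n)}$: the phase $n\xi\mp k\varphi_1(n)$ has curvature essentially independent of $\xi$ but the relevant stationary-phase/major-arc structure shifts with $k$, and one needs the decay of $\widehat{\chi}(k)$ (only $\asymp 1/k$ for a sharp indicator, or faster for a Fejér mollification, at the cost of an error term to be absorbed) to beat the loss from summing over $k$ up to scale $\sim 1/\psi(2^j)$. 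Handling the boundary/rounding terms from $\lfloor h(n)\rfloor$ and the two-sided sets $\mathbf B_\pm$ simultaneously, and checking that the curvature hypotheses in Definitions~\ref{def:1}--\ref{def:2} are exactly strong enough to push the second-derivative test through on every dyadic block with constants that do not degenerate as $j\to\infty$, is the technical heart of the argument; everything else is bookkeeping with slowly varying functions.
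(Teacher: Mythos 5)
Your outline has a genuine gap at its core, and it is precisely the gap that forces the paper to take a different route. You propose to expand the sharp cutoff $\ind{\{\mp\varphi_1(n)\}<\psi(n)}$ in a Fourier series and then to estimate
\[
\sum_{n\sim 2^j} a_n\, e^{2\pi i (n\xi \mp k\varphi_1(n))}
\]
by the second-derivative (van der Corput) test, claiming square-root cancellation $O(2^{j/2})$. But the van der Corput lemma only produces cancellation when the summand is a \emph{pure} oscillation $e^{2\pi i F(n)}$ (possibly multiplied by a smooth, slowly varying amplitude). When the coefficients $a_n$ are arbitrary with $\lvert a_n\rvert \le 1$, no such cancellation is available: one can always choose $a_n = e^{-2\pi i(n\xi \mp k\varphi_1(n))}$ on the block to make the sum have full size $\asymp 2^j$. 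This is exactly the adversarial feature that makes the majorant problem nontrivial, and it means the central analytic step of your argument cannot be carried out. Nor does your fallback via $L^2$--$L^\infty$ interpolation close: the trivial $L^\infty$ bound is $\lvert \mathbf B_N\rvert$, giving $\lVert \sum a_n e(n\xi)\rVert_{L^p}^p \lesssim \lvert\mathbf B_N\rvert^{p-1}$, which must be compared against the lower bound $N^{-1}\lvert\mathbf B_N\rvert^p$ for the flat sum; that requires $N \lesssim \lvert \mathbf B_N\rvert$, false since $\lvert\mathbf B_N\rvert \sim \varphi_2(N) = o(N)$ when $L\in\mathcal L_0$ with $L\to\infty$.

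The paper circumvents this by never applying exponential-sum estimates to the coefficient sum. Instead it proves a restriction estimate (Proposition \ref{prop:2}) via a Tomas--Stein $TT^*$ argument for the weighted measure $\mu_N = N^{-1}\sum_{n\in\mathbf B_N}\psi(n)^{-1}\delta_n$. The operator $T_N T_N^*$ is convolution with $\mathcal F(\mu_N)$, and the van der Corput estimates (Lemma \ref{lem:1}, feeding into Lemma \ref{lem:2}) are applied only to the \emph{coefficient-free} quantity $\mathcal F(\mu_N - \nu_N)$, where $\nu_N$ is the normalized counting measure on $[1,N]$. The decay of $\mathcal F(\mu_N - \nu_N)$ in $L^\infty$ is interpolated against an $\ell^\infty$ bound on $\mu_N - \nu_N$ (for the $L^2\to L^2$ endpoint) to get the $L^{p'}\to L^p$ operator norm. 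The arbitrary coefficients $a_n$ only enter at the very end, as the input $f(n) = a_n\psi(n)$ to the abstract inequality $\lVert T_N f\rVert_{L^p}\lesssim N^{-1/p}\lVert f\rVert_{L^2(\mu_N)}$, where $\lvert a_n\rvert\le 1$ is all that is needed. Your exponential-sum toolkit (van der Corput, Fourier expansion of the sawtooth, dyadic blocks) is indeed the right input for Lemma \ref{lem:2}, but the structural wrapper — the weighted measure and the $TT^*$ duality — is what makes the argument immune to adversarial coefficients, and it is absent from your proposal.
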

We observe that by the Hausdorff--Young inequality for every $p \geq 2$ we obtain
\[
	\Big\|
	\sum_{n \in \mathbf B_N^{\pm} }a_ne^{2\pi i n\xi}
	\Big\|_{L^p(\TT, {\rm d}\xi)}
	\le
	|\mathbf B_N^\pm|^{1/p'}.
\]
Moreover, by integrating over frequencies $|\xi|\leq 1/(100N)$, we have the following lower bound
\[
	\Big\|
	\sum_{n\in \mathbf B_N^\pm}e^{2\pi i n\xi}
	\Big\|_{L^p(\TT, {\rm d}\xi)}
	\gtrsim
	|\mathbf B_N^\pm| N^{-1/p}.
\]
These inequalities combined together yield
\begin{align}
	\label{eq:29}
	\Big\|
	\sum_{n\in \mathbf B_N^\pm }a_ne^{2\pi i n\xi}
	\Big\|_{L^p(\TT, {\rm d}\xi)}
	\lesssim
	|\mathbf B_N^\pm|^{1/p} N^{-1/p}
	\Big\|
	\sum_{n\in \mathbf B_N^\pm}e^{2\pi i n\xi}
	\Big\|_{L^p(\TT, {\rm d}\xi)}.
\end{align}
By Proposition \ref{prop:1} for $c_2 = 1$, we have $|\mathbf B_N^{\pm}| \sim \varphi_2(N)$
where $\varphi_2(N)=N L_{\varphi_2}(N)$ for some slowly varying function $L_{\varphi_2} \in \calL_0$.
Therefore, applying  inequality \eqref{eq:29}, we obtain
\[
	\mathbf C_p(\mathbf B_N^{\pm}, N) \lesssim L_{\varphi_2}(N)^{1/p} \lesssim N^{\varepsilon}
\]
for any $\varepsilon>0$. Hence, the main difficulty in proving Theorem \ref{thm:1} is to show that the constant
in \eqref{eq:30} is independent of $N$.

Next, we would like to relax the hypothesis in Theorem \ref{thm:1} to allow any $c_2\in[1, 6/5)$.
It is possible at the expense of a slightly worse range of $p$. Let us introduce
\marginpar{check}
\[
	p(c_1, c_2)=\frac{2/c_1-6/c_2+6}{1/c_1+3/c_2-3}=2+\frac{12-12/c_2}{1/c_1+3/c_2 - 3}.
\]
We observe that if $c_1 \in [1, 2)$ and $c_2 \in[1, 6/5)$ then $1<\frac{1}{3c_1}+\frac{1}{c_2}$, thus
\[
	\frac{12-12/c_2}{1/c_1+3/c_2 - 3} \ge 0.
\]
Also notice that
\[
	\lim_{c_2 \to 1} p(c_1, c_2)=2.
\]
The extended version of Theorem \ref{thm:1} has the following form.
\begin{theorem}
  \label{thm:2}
	Assume that $c_1 \in [1, 2)$ and $c_2 \in [1, 6/5)$. Then for every $p \ge p(c_1, c_2)$ there
	exists a constant $\mathbf C_p>0$ such that for every $N\in\NN$ and any sequence of complex
	numbers $(a_n: n\in\NN)$ satisfying $\sup_{n\in\NN}|a_n|\le1$ we have
	\[
		\Big\|
		\sum_{n\in \mathbf B_N^{\pm}}a_ne^{2\pi i n\xi}
		\Big\|_{L^p(\TT, {\rm d}\xi)}
		\le
		\mathbf C_p
		\Big\|
		\sum_{n\in \mathbf B_N^{\pm}}e^{2\pi i n\xi}
		\Big\|_{L^p(\TT, {\rm d}\xi)}
	\]
	where $\mathbf B_N^{\pm}=\mathbf B_{\pm}\cap[1, N]$.
\end{theorem}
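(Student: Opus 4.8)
The plan is to establish the single restriction-type bound
\[
	\Big\| \sum_{n \in \mathbf B_N^{\pm}} a_n e^{2\pi i n\xi} \Big\|_{L^p(\TT, {\rm d}\xi)} \lesssim_p \abs{\mathbf B_N^{\pm}}\, N^{-1/p}, \qquad p \ge p(c_1, c_2),
\]
for all sequences with $\abs{a_n}\le 1$, which, together with the lower bound $\|\sum_{n\in\mathbf B_N^\pm}e^{2\pi in\xi}\|_{L^p}\gtrsim \abs{\mathbf B_N^\pm}N^{-1/p}$ obtained by integrating over $\abs\xi\le 1/(100N)$ exactly as in the derivation of \eqref{eq:29}, yields Theorem \ref{thm:2}. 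By Proposition \ref{prop:1} we may replace $\abs{\mathbf B_N^\pm}$ by $B_N:=\varphi_2(N)$. Since the trivial estimate $\|\sum_{n\in\mathbf B_N^\pm}a_ne^{2\pi in\xi}\|_{L^\infty(\TT)}\le \abs{\mathbf B_N^\pm}\lesssim B_N$ allows one to interpolate the target from the endpoint value of $p$ to all larger $p$, it suffices to prove the displayed inequality for $p=p(c_1,c_2)$ (and, as $p(c_1,1)=2$, the case $c_2=1$ at $p=2$ is moreover immediate by Parseval). The two sets $\mathbf B_\pm$ are interchanged by replacing $\varphi_1$ with $-\varphi_1$, so I only treat $\mathbf B_-$.

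Using the elementary identity $\ind{\{t\}<\delta}=\lfloor t\rfloor-\lfloor t-\delta\rfloor$ for $\delta\in[0,1/2]$, together with $\lfloor t\rfloor=t-\tfrac12-\psi_0(t)$ where $\psi_0(t)=\{t\}-\tfrac12$ is the odd $1$-periodic sawtooth, one gets $\ind{\mathbf B_-}(n)=\psi(n)+\psi_0(\varphi_1(n))-\psi_0(\varphi_1(n)+\psi(n))$, hence $\sum_{n\in\mathbf B_N^\pm}a_ne^{2\pi in\xi}=S_0(\xi)+S_1(\xi)$ with main term $S_0(\xi)=\sum_{n\le N}a_n\psi(n)e^{2\pi in\xi}$ and oscillatory term $S_1$ carrying the two sawtooth contributions. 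The main term is harmless for every $p\ge2$: by \eqref{eq:5} one has $\psi\sim\varphi_2'$, and since $\varphi_2'$ is monotone and regularly varying of negative index $1/c_2-1\in(-1/6,0]$ (Definitions \ref{def:1}--\ref{def:3}), Karamata's theorem gives $\sum_{n\le N}\psi(n)\sim\varphi_2(N)=B_N$ and $\sum_{n\le N}\psi(n)^2\lesssim B_N^2 N^{-1}$, so $\|S_0\|_{L^\infty}\lesssim B_N$, $\|S_0\|_{L^2}^2\lesssim B_N^2 N^{-1}$, and interpolation gives $\|S_0\|_{L^p}\lesssim B_N N^{-1/p}$. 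Everything thus reduces to $\|S_1\|_{L^p}\lesssim B_N N^{-1/p}$ at $p=p(c_1,c_2)$. Next replace $\psi_0$ by its Vaaler trigonometric approximation of degree $J=N^\kappa$ for a small $\kappa>0$: $\psi_0(t)=\sum_{1\le\abs j\le J}c_j e^{2\pi ijt}+R_J(t)$ with $\abs{c_j}\lesssim\abs j^{-1}$ and $\abs{R_J(t)}\le J^{-1}\sum_{\abs j\le J}(1-\abs j/(J+1))e^{2\pi ijt}$. The $R_J$-contribution to $S_1$ has $L^\infty(\TT)$-norm $\lesssim\sum_{n\le N}(\abs{R_J(\varphi_1(n))}+\abs{R_J(\varphi_1(n)+\psi(n))})\lesssim NJ^{-1}+N^{1-\delta}$, the power saving coming from the van der Corput estimate for $\sum_{n\le N}e^{2\pi ij\varphi_1(n)}$, valid for $1\le\abs j\le J$ because $\varphi_1\in\calR_{c_1}$ has $\varphi_1''$ regularly varying of index $1/c_1-2<0$ (this uses $c_1<2$); for $\kappa$ below the threshold forced by the next step this is $\ll B_N N^{-1/p}$. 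It remains to bound $\big\|\sum_{1\le\abs j\le J}T_j\big\|_{L^p}$, where $T_j(\xi)=\sum_{n\le N}a_n c_j(1-e^{2\pi ij\psi(n)})e^{2\pi i(j\varphi_1(n)+n\xi)}$ has $n$-th coefficient $O(\min\{\abs j^{-1},\psi(n)\})$.

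For the core estimate, Cauchy--Schwarz gives $|S_1|^2\le(\sum_{\abs j\le J}\abs j^{-1-\varepsilon})\sum_{\abs j\le J}\abs j^{1+\varepsilon}|T_j|^2$, so by the triangle inequality in $L^{p/2}$ it suffices to bound $\sum_{1\le\abs j\le J}\abs j^{1+\varepsilon}\|T_j\|_{L^p}^2$. Beyond the elementary $\|T_j\|_{L^\infty}\lesssim\min\{N\abs j^{-1},B_N\}$ and $\|T_j\|_{L^2}^2\lesssim\min\{N\abs j^{-2},B_N^2 N^{-1}\}$, the decisive input is an intermediate even-moment bound: expand $\|T_j\|_{L^{2k}}^{2k}$ into the $k$-fold additive-energy sum, isolate the diagonal (bounded by $\|T_j\|_{L^2}^{2k}$), and estimate each off-diagonal block $\sum_{n\le N}w_n\overline{w_{n+h}}\,e^{2\pi ij(\varphi_1(n)-\varphi_1(n+h))}$ by van der Corput using $(\varphi_1(\cdot)-\varphi_1(\cdot+h))''\asymp\abs{jh}\abs{\varphi_1'''}$ with $\varphi_1'''$ regularly varying of index $1/c_1-3$; summing over the shifts $h$ and performing partial summation against $w_n=O(\min\{\abs j^{-1},\psi(n)\})$ produces the quantity $1/c_1+3/c_2-3$ entering $p(c_1,c_2)$. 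The value $6/5$ is precisely the range of $c_2$ for which this $L^6$-type estimate — interpolated against the $L^\infty$ and $L^2$ bounds and resummed over $j$ — yields $\sum_{1\le\abs j\le J}\abs j^{1+\varepsilon}\|T_j\|_{L^p}^2\lesssim B_N^2 N^{-2/p}$ for every $p\ge p(c_1,c_2)$, and it fixes the admissible $\kappa$ a posteriori. Assembling $S_0$, the $R_J$-error, and $\sum_j T_j$ then completes the proof; the case $c_2=1$ recovers Theorem \ref{thm:1}.

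The main obstacle is this last step: carrying out the van der Corput analysis of $\|T_j\|_{L^{2k}}$ \emph{uniformly} in $1\le\abs j\le J$, tracking precisely the dependence on $j$, on the shift $h$, and on $N$ through the regularly varying derivatives $\varphi_1',\varphi_1'',\varphi_1'''$ and through $\psi\sim\varphi_2'$, and then checking that the exponents arising after interpolation and resummation collapse exactly to $p(c_1,c_2)=2+\frac{12-12/c_2}{1/c_1+3/c_2-3}$. The subsidiary difficulty is the systematic bookkeeping of the slowly varying factors (via Definitions \ref{def:1}--\ref{def:3} and Proposition \ref{prop:1}) together with the choice of the truncation exponent $\kappa$, which is where the hypothesis $c_2<6/5$ is actually used.
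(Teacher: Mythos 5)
Your preliminary reductions (proving the restriction-type bound $\|\sum_{n\in\mathbf B_N^\pm}a_ne^{2\pi in\xi}\|_{L^p}\lesssim |\mathbf B_N^\pm|N^{-1/p}$ and pairing it with the lower bound from $|\xi|\le 1/(100N)$), the sawtooth decomposition $\ind{\mathbf B_-}(n)=\psi(n)+\psi_0(\varphi_1(n))-\psi_0(\varphi_1(n)+\psi(n))$, the treatment of the main term $S_0$ by $L^2/L^\infty$ interpolation, and the Vaaler/van der Corput control of the truncation error all parallel the paper's Section 2 preliminaries and are sound. The genuine gap is exactly where you say it is: the higher-moment bound $\|T_j\|_{L^{2k}}$. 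You sketch expanding the $2k$-th moment into additive-energy sums, isolating the diagonal, applying van der Corput to off-diagonal blocks with shift $h$, and ``checking that the exponents collapse to $p(c_1,c_2)$,'' but none of this is carried out, and you flag it yourself as ``the main obstacle.'' Without that estimate nothing in the argument produces a power saving in $N$ beyond the trivial $L^2/L^\infty$ interpolation, which, as one can check, gives $\sum_j|j|^{1+\varepsilon}\|T_j\|_{L^p}^2\lesssim J^\varepsilon N^{2-2/p}$, and $N^{2-2/p}\lesssim B_N^2N^{-2/p}$ fails as soon as $c_2>1$. So the proposal does not constitute a proof.

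More importantly, the route you chose is structurally harder than the one the paper takes, and this is where the proof of Proposition \ref{prop:2} does something you never use: a Tomas--Stein $TT^*$ argument. The paper introduces the weighted measure $\mu_N(x)=N^{-1}\sum_{n\in\mathbf B_N}\psi(n)^{-1}\delta_n(x)$ and the operator $T_Nf=\calF(f\mu_N)$, and observes that the desired $L^2(\mu_N)\to L^p(\TT)$ bound is equivalent to an $L^{p'}\to L^p$ bound on $T_NT_N^*f=f*\calF(\mu_N)$. Writing $\mu_N=\nu_N+(\mu_N-\nu_N)$ with $\nu_N$ the normalized counting measure on $[1,N]$, the whole estimate reduces to (i) an $\ell^\infty$ bound on $\mu_N-\nu_N$ (trivial) and (ii) an $L^\infty(\TT)$ bound on $\calF(\mu_N-\nu_N)$ (Lemma \ref{lem:2}), which is a \emph{single one-variable exponential sum} handled by van der Corput with the second derivative --- no $L^{2k}$ moments, no additive-energy expansion, no off-diagonal bookkeeping in $h$. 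Riesz--Thorin interpolation between these two endpoints directly produces the threshold $p(c_1,c_2)=2+\frac{12-12/c_2}{1/c_1+3/c_2-3}$. So the missing idea in your proposal is precisely the $TT^*$ reduction: it converts the hard $L^p$ restriction estimate into an easy $L^1\to L^\infty$ multiplier estimate plus an easy $L^2\to L^2$ one, whereas your direct attack on $\|S_1\|_{L^p}$ forces you into multi-variable exponential sums that the paper never needs to touch. Incorporating the weight $\psi(n)^{-1}$ into the measure (rather than handling $S_0$ separately and bare) is also what makes $\calF(\nu_N)$ --- a pure Dirichlet kernel --- the natural main term to compare against.
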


We were inspired to study Hardy--Littlewood majorant property 
 by the paper of Mockenhaupt and Schlag \cite{MoSc} where the authors
 considered sparse random subsets of the integers. The desire to
 better understand structure of deterministic sets which satisfy
 \eqref{eq:0} was our principal motivation.   

Before turning to the arguments, let us begin with some preliminary remarks.
The heart of the matter lies in proving our Proposition \ref{prop:2},
which can be though of as a restriction estimate for our sets
$\mathbf{B}_N^{\pm}$. We accomplish this using a Tomas--Stein $TT^*$
argument, which forces us to estimate certain exponential sums, see
Section 3 below. These estimates are quite delicate, and lead to the
technical restriction on the range of $L^p$ spaces which we are able
to handle; in particular, we do not yet know how to extend Theorem 2
to the full regime $2< p < p(c_1,c_2)$. Finally, it is worth calling
attention to the explicit construction of the sets
$\mathbf{B}_N^{\pm}$ for which the full strength of the
Hardy--Littlewood property holds. To the best of the authors knowledge it is
the first treatment where such a wide family of subsets of the integers
satisfies property \eqref{eq:0}.

\section{Some properties of the sets $\mathbf B_{\pm}$}
As it has been observed, when $c_1\in[1, 2)$ and $c_2\in[1, 6/5)$ we have $1<\frac{1}{3c_1}+\frac{1}{c_2}$, or equivalently 
\[ 3(1-\gamma_2)+(1-\gamma_1)<1,\]
where $\gamma_1=1/c_1$ and $\gamma_2 = 1/c_2$.
Under this assumption, we prove the asymptotic formula for the cardinality of sets $\mathbf B_{\pm}$.
\begin{proposition}
	\label{prop:1}
	For every $\epsilon > 0$
	\begin{align}
		\label{eq:9}
		|\mathbf B_N^\pm| = \varphi_2(N) \big(1 + \calO\big(N^{-\epsilon}\big)\big).
	\end{align}
\end{proposition}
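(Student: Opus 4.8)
The plan is to count the elements of $\mathbf B_N^{\pm}$ by summing the indicator condition $\{\mp\varphi_1(n)\} < \psi(n)$ over $n \le N$, and to replace this "fractional part in a short interval" condition by its expected density $\psi(n)$ plus a small error controlled via exponential sums. First I would write
\[
	|\mathbf B_N^{\pm}| = \sum_{n=1}^{N} \ind{\{\mp\varphi_1(n)\} < \psi(n)}.
\]
The natural first move is a Fourier-analytic (Erd\H{o}s--Tur\'an / Vaaler-type) approximation of the sharp cutoff $\ind{[0,\psi(n))}$ on $\TT$ by a trigonometric polynomial of degree $\sim H$: the main term contributes $\sum_{n\le N}\psi(n)$, while the remaining terms contribute sums of the form $\sum_{n\le N} c_k e(\mp k\varphi_1(n))$ for $1 \le |k| \le H$ with $|c_k| \lesssim \min\{\psi(n), 1/|k|\}$. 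The main term is handled by noting that $\psi \sim \varphi_2'$ by \eqref{eq:5}, so that $\sum_{n\le N}\psi(n) = \varphi_2(N) + o(\varphi_2(N))$ by comparison of the sum with $\int_{x_0}^{N}\varphi_2'(t)\,{\rm d}t$ (here convexity/monotonicity of $\varphi_2$ and the slow variation hypotheses from Definitions \ref{def:1}--\ref{def:3} make the Euler--Maclaurin comparison error negligible against $\varphi_2(N) = N L_{\varphi_2}(N)$ with $L_{\varphi_2}\in\calL_0$, since $\lim_{x\to\infty}L_{\varphi_2}(x)=\infty$).

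The core estimate is the bound on the exponential sums $\sum_{n\le N} e(k\varphi_1(n))$ for $k\ne0$. Here I would invoke van der Corput's method: since $\varphi_1 = h_1^{-1}$ with $h_1 \in \calR_{c_1}$ increasing, convex, regularly varying of index $c_1 \in [1,2)$, the inverse $\varphi_1$ is regularly varying of index $\gamma_1 = 1/c_1 \in (1/2, 1]$, and the growth/derivative conditions in Definition \ref{def:1} translate into the estimates $\varphi_1''(x) \sim \gamma_1(\gamma_1-1)x^{\gamma_1-2}L_{\varphi_1}(x)$ (up to slowly varying factors), with analogous control on $\varphi_1'$ and $\varphi_1'''$. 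Applying the second-derivative test (van der Corput) on dyadic blocks $n \sim M$ gives
\[
	\Big|\sum_{n\sim M} e(k\varphi_1(n))\Big| \lesssim (|k|\,|\varphi_1''(M)|)^{1/2} M + (|k|\,|\varphi_1''(M)|)^{-1/2},
\]
and summing over dyadic $M \le N$ and then over $1\le|k|\le H$ against the weights $1/|k|$ yields a total error of the shape $\varphi_2(N)\cdot(\text{negative power of }N)$ once $H$ is chosen as a suitable small power of $N$ — this is where the polynomial savings $N^{-\epsilon}$ in \eqref{eq:9} comes from, and one must check that the exponent stays positive under $3(1-\gamma_2)+(1-\gamma_1)<1$, which is exactly the standing hypothesis recalled at the start of the section. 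When $c_1 = 1$ (so $\gamma_1 = 1$ and $\varphi_1'' \to 0$ only slowly, through the $\calL_0$ mechanism) the second-derivative test degenerates, and one instead uses that $\vartheta$ is positive decreasing with $C_\varepsilon\vartheta(x)x^\varepsilon \ge 1$ to still extract the required power saving; this edge case is the main obstacle and needs the full force of Definition \ref{def:2}.

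Assembling the pieces: the main term gives $\varphi_2(N)(1+o(1))$, the Erd\H{o}s--Tur\'an truncation error is $\lesssim N/H$, and the exponential-sum error is $\lesssim \varphi_2(N)N^{-\delta}$ for some $\delta = \delta(c_1,c_2,\epsilon) > 0$; balancing the choice of $H$ (a small power of $N$) absorbs everything into $\varphi_2(N)\,\calO(N^{-\epsilon})$ after shrinking $\epsilon$, giving \eqref{eq:9}. I expect the bookkeeping of the slowly varying factors through the inversion $h_1 \leftrightarrow \varphi_1$ and through the van der Corput estimates to be the most delicate routine part, and the $c_1=1$ degeneracy to be the genuine difficulty requiring the hypotheses of Definition \ref{def:2}.
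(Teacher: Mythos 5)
Your proposal is essentially the same argument as the paper's, with a modest technical variation in how the indicator function is linearised. The paper first establishes the identity $\ind{n\in\mathbf B}=\lfloor\varphi_1(n)\rfloor-\lfloor\varphi_1(n)-\psi(n)\rfloor=\psi(n)+\Phi(\varphi_1(n)-\psi(n))-\Phi(\varphi_1(n))$ where $\Phi(x)=\{x\}-1/2$ is the sawtooth (this is the content of Lemma~\ref{equiv}), and then uses the truncated Fourier series of $\Phi$ together with the Fourier expansion of the error $\min\{1,1/(M\|x\|)\}$; you instead go directly to an Erd\H{o}s--Tur\'an/Vaaler approximation of $\ind{[0,\psi(n))}$. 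Both routes produce the same main term $\sum_{n\le N}\psi(n)$ and the same family of exponential sums with phases $m\varphi_1(n)$ and $m(\varphi_1(n)-\psi(n))$; both invoke van der Corput's second-derivative test with the $\sigma_1$-correction to survive the $c_1=1$ degeneracy, as you correctly flag; and both compare $\sum\psi(n)$ with $\varphi_2(N)$ via partial summation (the paper's \eqref{eq:32}). One step you elide but should be aware of: because $\psi(n)$ varies with $n$, the Fourier coefficients $c_k(n)$ in your expansion are $n$-dependent, and you cannot simply pull out the bound $|c_k(n)|\lesssim 1/|k|$ in front of the exponential sum --- one needs an Abel-summation step exploiting a derivative bound on $c_k(\cdot)$. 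This is precisely what the paper does in its treatment of $I_1$, introducing $\phi_m(x)=e^{2\pi im\psi(x)}-1$ and bounding $|\phi_m|$ and $|\phi_m'|$. You call this ``delicate routine bookkeeping'', which is fair, but it is the place a careless write-up would go wrong. One further remark: the paper proves the stronger uniform estimate \eqref{eq:20} with a free frequency $\xi$, not just the $\xi=0$ case you address; the extra uniformity is exactly what powers Lemma~\ref{lem:2} and the restriction estimate downstream, so if you intend to continue to Proposition~\ref{prop:2} you should carry $\xi$ along from the start as the paper does.
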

From now on we only work with the sets $\mathbf B_+$ because all the results remain
valid for $\mathbf B_-$ with similar proofs. To simplify the notation we write
\[
	\mathbf B=\mathbf B_+=\{n\in\NN: \{\varphi_1(n)\}<\psi(n)\}.
\]
We need the following working characterizations of the sets $\mathbf B$.
\begin{lemma}
	\label{equiv}
	$n\in\mathbf B$ if and only if
	$\lfloor \varphi_1(n) \rfloor - \lfloor \varphi_1(n) - \psi(n) \rfloor = 1$.
\end{lemma}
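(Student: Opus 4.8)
The plan is to reduce the statement to a direct computation with the floor and fractional-part functions, with no auxiliary machinery. Write $x = \varphi_1(n)$ and decompose $x = \lfloor x \rfloor + \{x\}$ with $\{x\} \in [0,1)$. The only structural inputs needed are that $\psi$ takes values in $(0,\infty)$ and that $\psi(n) \le 1/2$ for $n \ge x_0$; together these give $0 < \psi(n) < 1$, so $x - \psi(n) > \lfloor x \rfloor - 1$ always, and the two cases below are genuinely complementary.

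First I would treat the case $n \in \mathbf B$, i.e.\ $\{x\} < \psi(n)$. Then $\{x\} - \psi(n) \in [-\psi(n), 0) \subseteq (-1, 0)$, hence $x - \psi(n) = \lfloor x \rfloor + (\{x\} - \psi(n))$ lies in the open interval $(\lfloor x \rfloor - 1, \lfloor x \rfloor)$. Therefore $\lfloor \varphi_1(n) - \psi(n) \rfloor = \lfloor \varphi_1(n) \rfloor - 1$, and the left-hand side of the asserted identity equals $1$.

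Conversely, if $n \notin \mathbf B$, i.e.\ $\{x\} \ge \psi(n)$, then $\{x\} - \psi(n) \in [0, \{x\}] \subseteq [0,1)$, so $x - \psi(n) \in [\lfloor x \rfloor, \lfloor x \rfloor + 1)$ and consequently $\lfloor \varphi_1(n) - \psi(n) \rfloor = \lfloor \varphi_1(n) \rfloor$, making the left-hand side equal to $0$. Combining the two cases yields the stated equivalence (and incidentally shows that $\lfloor \varphi_1(n) \rfloor - \lfloor \varphi_1(n) - \psi(n) \rfloor \in \{0,1\}$ for every $n$). There is no real obstacle: the whole content is the bookkeeping observation that $\psi(n) < 1$ prevents $\lfloor \varphi_1(n) - \psi(n)\rfloor$ from dropping by more than one, while $\psi(n) > 0$ makes the dichotomy $\{x\} < \psi(n)$ versus $\{x\} \ge \psi(n)$ exhaustive and mutually exclusive.
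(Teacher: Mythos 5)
Your proof is correct and rests on the same elementary floor/fractional-part bookkeeping as the paper's; the only structural difference is that you run a single dichotomy on $\{x\}<\psi(n)$ versus $\{x\}\ge\psi(n)$ and compute $\lfloor x-\psi(n)\rfloor$ exactly in each case, whereas the paper argues the forward implication by trapping the integer $\lfloor\varphi_1(n)\rfloor-\lfloor\varphi_1(n)-\psi(n)\rfloor$ in the interval $(0,3/2)$ and then handles the reverse implication by a separate algebraic rearrangement. Your version is marginally cleaner in that it also delivers, as a byproduct, that the difference is always $0$ or $1$, but the two arguments are the same in substance.
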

\begin{proof}
	We begin with the forward implication; it suffices to show that if $n \in \mathbf B$,
	the integer
	\[
		\lfloor \varphi_1(n) \rfloor - \lfloor \varphi_1(n) - \psi(n) \rfloor
	\]
	belongs to $(0,3/2)$. By definition, if
	$n \in \mathbf B$ then $0\leq \varphi_1(n) - \lfloor \varphi_1(n) \rfloor < \psi(n)$, thus
	\[
		- \varphi_1(n) \leq - \lfloor  \varphi_1(n) \rfloor < \psi(n) - \varphi_1(n)
	\]
	if and only if
	\[
		\varphi_1(n) \geq \lfloor  \varphi_1(n) \rfloor > \varphi_1(n) - \psi(n),
	\]
	from where it follows that
	\[
		\lfloor \varphi_1(n)\rfloor - \lfloor \varphi_1(n) - \psi(n) \rfloor
		> \{ \varphi_1(n) - \psi(n) \} \geq 0.
	\]
	In view of $\lfloor  \varphi_1(n) - \psi(n) \rfloor \geq \varphi_1(n) - \psi(n) -1$, we obtain
	\begin{align*}
		\lfloor \varphi_1(n) \rfloor - \lfloor \varphi_1(n) - \psi(n) \rfloor
		& \leq \lfloor  \varphi_1(n) \rfloor - \varphi_1(n) + \psi(n) + 1\\
		& \leq \psi(n) +1 < 3/2.
	\end{align*}
	We now turn to the reverse implication; if
	$\lfloor \varphi_1(n) \rfloor = 1 +\lfloor  \varphi_1(n) - \psi(n) \rfloor$, we have
	\begin{align*}
		0 & \leq \varphi_1(n) - \lfloor  \varphi_1(n) \rfloor
		= \varphi_1(n) -1 - \lfloor  \varphi_1(n) - \psi(n) \rfloor \\
		& < \varphi_1(n) - 1 + 1 + \psi(n) - \varphi_1(n) = \psi(n).
	\end{align*}
	Consequently, we get $\{ \varphi_1(n) \} < \psi(n)$, as desired.
\end{proof}

Our next task is to show that for every $\delta\ge0$ satisfying $3(1-\gamma_2)+(1-\gamma_1)+6\delta<1$
there is $\delta'>0$ such that
\begin{align}
	\label{eq:20}
	\sum_{n\in\mathbf{B}_N}e^{2\pi i \xi n}
	=\sum_{n=1}^N\psi(n)e^{2\pi i \xi n}+\mathcal O\big(\varphi_2(N)N^{-\delta-\delta'}\big)
\end{align}
where the implied constant is independent of $\xi$ and $N$. Let us observe that the asymptotic formula \eqref{eq:9}
follows from \eqref{eq:20} by taking $\xi = 0$. Indeed, we have
\begin{align*}
	|\mathbf B_N|
	=
	\sum_{n\in\mathbf{B}_N}1=\sum_{n=1}^N\psi(n)+\mathcal O\big(\varphi_2(N)N^{-\varepsilon}\big)
\end{align*}
and summation by parts yields
\begin{align}
	\label{eq:32}
	\frac{1}{\varphi_2(N)}\sum_{n=1}^N\psi(n)
	=\frac{N\psi(N)}{\varphi_2(N)} - \frac{1}{\varphi_2(N)}\int_1^Nx\psi'(x) {\: \rm d}x
	=\frac{1}{\varphi_2(N)}\int_1^N\psi(x)dx
	=1 + o(1).
\end{align}
Although, for the proof of \eqref{eq:9} we only needed \eqref{eq:20} with $\xi=0$, the more general version
will be used in our future works.

For the proof of \eqref{eq:20}, let us introduce the ``sawtooth'' function $\Phi(x)=\{x\}-1/2$. Notice that
\begin{align*}
	\lfloor \varphi_1(n) \rfloor - \lfloor \varphi_1(n) - \psi(n) \rfloor
	=\psi(n)+\Phi\big(\varphi_1(n)-\psi(n) \big)-\Phi\big(\varphi_1(n)\big).
\end{align*}
With this in mind, we may write
\begin{align}
	\label{eq:13}
	\sum_{n\in\mathbf{B}_N}e^{2\pi i \xi n}
	=\sum_{n=1}^N\psi(n)e^{2\pi i \xi n}
	+\sum_{n=1}^N\big(\Phi\big(\varphi_1(n)-\psi(n) \big)-\Phi\big(\varphi_1(n)\big)\big)e^{2\pi i \xi n}.
\end{align}
The second sum we absorb into an error term of the order $\calO\big(\varphi_2(N) N^{-\varepsilon}\big)$. To do so,
see \cite{hb}, we expand $\Phi$ into its Fourier series, i.e.
\begin{align*}
	\Phi(x)=\sum_{0<|m|\le M} \frac{1}{2\pi i m} e^{-2\pi imx}
	+\calO\left(\min\left\{1, \frac{1}{M\|x\|}\right\}\right),
\end{align*}
for some $M>0$ where $\|x\| = \min\{\norm{x-n} : n \in \ZZ\}$ is the distance of $x \in \RR$ to the nearest
integer. Next, we expand
\begin{align}
	\label{eq:15}
	\min\left\{1, \frac{1}{M\|x\|}\right\}=\sum_{m\in\ZZ}b_m e^{2\pi imx}
\end{align}
where
\begin{align}
	\label{eq:16}
	|b_m|\lesssim \min\left\{\frac{\log M}{M}, \frac{1}{|m|}, \frac{M}{|m|^2}\right\}.
\end{align}
We split the second sum in \eqref{eq:13} into three parts,
\begin{align*}
	I_1&=\sum_{0<|m|\le M}\frac{1}{2\pi i m}
	\sum_{n=1}^Ne^{2\pi i(n\xi-m\varphi_1(n))}\big(e^{2\pi im\psi(n)} - 1 \big),\\
	I_2&=\mathcal O\bigg( \sum_{n=1}^N\min\left\{1, \frac{1}{M\|\varphi_1(n)-\psi(n)\|}\right\}\bigg),\\
	I_3&=\mathcal O\bigg( \sum_{n=1}^N\min\left\{1, \frac{1}{M\|\varphi_1(n)\|}\right\}\bigg).
\end{align*}
Now, our aim is to show that each part $I_1, I_2$ and $I_3$ is $\calO\big(\varphi_2(N)N^{-\varepsilon}\big)$. In the proof
we use the estimates for the following trigonometric sums: for $m \in \ZZ \setminus \{0\}$, $l \in \{0, 1\}$ and
$X \leq X' \leq 2X$ we consider
\[
	\sum_{X \leq n \leq X' \leq 2X}
 	e^{2\pi i(\xi n + m(\varphi_1(n)-l\psi(n)))}
\]
By \cite[Lemma 2.14]{m1}, if $c_1 = 1$ then there is a positive decreasing real function $\sigma_1$ satisfying
$\sigma_1(2x) \simeq \sigma_1(x)$ and $\sigma_1(x) \gtrsim x^{-\varepsilon}$ for any $\varepsilon > 0$, such that
\begin{equation}
	\label{eq:1}
	\varphi_1''(x) \simeq \frac{\varphi_1(x) \sigma_1(x)}{x^2}.
\end{equation}
We set $\sigma_1 \equiv 1$ whenever $c_1 > 1$. Similarly, by \cite[Lemma 2.14]{m1} for $\varphi_2'''$ we obtain
\begin{equation}
	\label{eq:2}
	\varphi_2'''(x) \simeq \frac{\varphi_2(x)}{x^3}.
\end{equation}
Therefore, by \eqref{eq:5} we may write
\[
	\abs{\psi''(x)} \simeq \abs{\varphi_2'''(x)} \simeq \frac{\varphi_2(x)}{x^3}.
\]
Since $1/c_2 \leq 1 < 1+1/c_1$, we get
\[
	\frac{\varphi_2(x)}{x \sigma_1(x) \varphi_1(x)} = o(1),
\]
thus
\begin{equation*}
	\abs{\psi''(x)} = o\Big(\frac{\sigma_1(x) \varphi_1(x)}{x^2}\Big).
\end{equation*}
Let $F(x)=\xi x + m(\varphi_1(x)-l\psi(x))$. By \eqref{eq:1} and \eqref{eq:2}, for any $X \leq x \leq X' \leq 2X$
we may write
\[
	\abs{F''(x)} = \abs{m} \cdot \abs{\varphi_1''(x) - l \psi''(x)}
	\simeq \frac{\abs{m} \sigma_1(X) \varphi_1(X)}{X^2}.
\]
Therefore, the Van der Corput lemma (see \cite[Theorem 2.2]{gk}) yields
\begin{align}
	\nonumber
	\Big|\sum_{X < n \le 2X}
	e^{2\pi i(\xi n + m(\varphi_1(n)-l\psi(n)))}
	\Big|
	&\lesssim
	X \left(\frac{m\sigma_1(X)\varphi_1(X)}{X^2}\right)^{1/2}
	+\left(\frac{X^2}{m\sigma_1(X)\varphi_1(X)}\right)^{1/2}\\
	\label{eq:22}
	&\lesssim m^{1/2}X\big(\sigma_1(X)\varphi_1(X)\big)^{-1/2}.
\end{align}
Finally, we get
\begin{align*}
	\Big|
	\sum_{j = 1}^N e^{2\pi i(\xi n + m(\varphi_1(n)-l\psi(n)))}
	\Big|
	&\leq
	\sum_{j = 0}^{\lceil \log N \rceil}
	\Big|
	\sum_{\atop{2^j < n \leq 2^{j+1}}{n \leq N}}
	e^{2\pi i (\xi n + m(\varphi_1(n) - l \psi(n)))}
	\Big|\\
	&\lesssim
	m^{1/2} N (\log N) \big(\sigma_1(N) \varphi_1(N)\big)^{-1/2}
\end{align*}
since the function $x \mapsto x \big(\sigma_1(x) \varphi_1(x) \big)^{-1/2}$ is increasing.
In particular, we have proven the following lemma.
\begin{lemma}
	\label{lem:1}
	There is a positive decreasing real function $\sigma_1$ satisfying $\sigma_1(2x) \simeq \sigma_1(x)$
	and $\sigma_1(x) \gtrsim x^{-\varepsilon}$, for any $\varepsilon > 0$, such that for every
	$m \in \ZZ \setminus \{0\}$, $l\in\{0, 1\}$, and $N\ge 1$ we have
	\begin{align*}
		\Big|\sum_{1\le n\le N}\ e^{2\pi i(\xi n + m(\varphi_1(n)-l\psi(n)))}\Big|
		\lesssim
		|m|^{1/2} N (\log N) \big(\sigma_1(N)\varphi_1(N)\big)^{-1/2}.
	\end{align*}
	If $c_1>1$ then $\sigma_1\equiv1$. The implied constant is independent of $m$, $N$ and $\xi$.
\end{lemma}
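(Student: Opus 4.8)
The plan is to establish Lemma \ref{lem:1} by the standard dyadic decomposition together with a single application of the Van der Corput second-derivative estimate on each block, which is precisely the computation already laid out in the paragraph preceding the statement; the role of the lemma is simply to record that computation in a clean form. Concretely, I would first fix $m \in \ZZ \setminus \{0\}$ and $l \in \{0,1\}$, set $F(x) = \xi x + m(\varphi_1(x) - l\psi(x))$, and reduce the full sum over $1 \le n \le N$ to sums over dyadic intervals $2^j < n \le 2^{j+1}$ with $2^j < N$, of which there are $O(\log N)$. On each such block, and more generally on any subinterval $X < n \le X' \le 2X$, the linear term $\xi x$ contributes nothing to the second derivative, so $F''(x) = m(\varphi_1''(x) - l\psi''(x))$.

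The key input is the asymptotic $\varphi_1''(x) \simeq \sigma_1(x)\varphi_1(x)/x^2$ from \eqref{eq:1} (with $\sigma_1 \equiv 1$ when $c_1 > 1$), supplied by \cite[Lemma 2.14]{m1}, and the bound $|\psi''(x)| \simeq \varphi_2(x)/x^3$ coming from \eqref{eq:2} and the third limit in \eqref{eq:5}. Under the hypotheses $c_1 \in [1,2)$, $c_2 \in [1,6/5)$ one has $1/c_2 \le 1 < 1 + 1/c_1$, which forces $\varphi_2(x)/(x\sigma_1(x)\varphi_1(x)) = o(1)$, hence $|\psi''(x)| = o(\sigma_1(x)\varphi_1(x)/x^2)$; consequently the $\psi''$ term is dominated by the $\varphi_1''$ term and
\[
	|F''(x)| \simeq \frac{|m|\,\sigma_1(X)\varphi_1(X)}{X^2}
\]
uniformly on the block, using $\sigma_1(2x) \simeq \sigma_1(x)$ and the regular variation of $\varphi_1$. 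Feeding $\lambda_2 := |m|\sigma_1(X)\varphi_1(X)/X^2$ into the Van der Corput lemma \cite[Theorem 2.2]{gk} gives each block bounded by $X\lambda_2^{1/2} + \lambda_2^{-1/2} \lesssim |m|^{1/2} X (\sigma_1(X)\varphi_1(X))^{-1/2}$, where the second term is absorbed into the first because $\sigma_1(X)\varphi_1(X) \le X^2$ for large $X$.

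Summing the $O(\log N)$ dyadic contributions and invoking monotonicity of $x \mapsto x(\sigma_1(x)\varphi_1(x))^{-1/2}$ — which holds since $\varphi_1(x) = x^{\gamma_1}L(x)^{-1}$-type growth with $\gamma_1 = 1/c_1 \le 1$ and $\sigma_1$ is slowly varying — bounds the total by $|m|^{1/2} N (\log N)(\sigma_1(N)\varphi_1(N))^{-1/2}$, which is exactly \eqref{eq:22} summed up and yields the claim; the properties of $\sigma_1$ (positivity, $\sigma_1(2x)\simeq\sigma_1(x)$, $\sigma_1(x) \gtrsim x^{-\varepsilon}$, and $\sigma_1 \equiv 1$ for $c_1 > 1$) are inherited verbatim from \cite[Lemma 2.14]{m1}. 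I do not see a genuine obstacle here: the only points requiring care are checking that the endpoint and boundary cases of the dyadic sum (the last, possibly incomplete, block $n \le N$) are handled by allowing $X < X' \le 2X$ in \eqref{eq:22}, and verifying the domination of $\psi''$ by $\varphi_1''$ at the one borderline exponent configuration $c_1 = c_2 = 1$, where $\gamma_1 = \gamma_2 = 1$ and one genuinely needs the $\calL_0$ hypothesis (so that $\varphi_2(x) = xL_{\varphi_2}(x)$ with $L_{\varphi_2} \to \infty$ slowly, still giving $\varphi_2(x)/(x\sigma_1(x)\varphi_1(x)) \to 0$ since $\sigma_1 \equiv 1$ and $\varphi_1(x) = xL_{\varphi_1}(x)$ with a comparably growing slowly varying factor) — this is where the restriction $L \in \calL_0$ rather than merely $L \in \calL$ is used.
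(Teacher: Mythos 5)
Your argument reproduces the paper's own proof essentially verbatim: a dyadic decomposition into $O(\log N)$ blocks followed by the Van der Corput second-derivative estimate on each block, using $\varphi_1''(x)\simeq\sigma_1(x)\varphi_1(x)/x^2$ from \cite[Lemma 2.14]{m1}, the domination $|\psi''(x)|=o(\sigma_1(x)\varphi_1(x)/x^2)$, and the monotonicity of $x\mapsto x(\sigma_1(x)\varphi_1(x))^{-1/2}$. Two small inaccuracies that do not affect the conclusion: absorbing the Van der Corput term $|m|^{1/2}(\sigma_1(X)\varphi_1(X))^{1/2}$ into $|m|^{1/2}X(\sigma_1(X)\varphi_1(X))^{-1/2}$ requires $\sigma_1(X)\varphi_1(X)\lesssim X$ (not $\leq X^2$), which holds since $\gamma_1\leq 1$; and the $\calL_0$ hypothesis enters primarily to secure the lower bound $\sigma_1(x)\gtrsim x^{-\varepsilon}$ in \eqref{eq:1}, while the $o(1)$ domination at $c_1=c_2=1$ already follows from the exponent count $\gamma_2\leq 1<1+\gamma_1$.
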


Next, we return to bounding $I_1$, $I_2$ and $I_3$.
\subsection{The estimate for $I_1$}
\label{sub:2.1}
Let
\[
	S(x)=\sum_{x\le n\le x'< 2x}e^{2\pi i(n\xi-m\varphi_1(n))}
\]
and $\phi_m(x)=e^{2\pi im\psi(x)} -1 $. We observe that
\[
	|\phi_m(x)| \lesssim m x^{-1} \varphi_2(x)
\]
and
\[
	|\phi_m'(x)|\lesssim mx^{-2} \varphi_2(x).
\]
Applying to the inner sum in $I_1$ summation by parts together with \eqref{eq:22} we obtain
\begin{multline*}
	\Big|
	\sum_{n=1}^Ne^{2\pi i(n\xi-m\varphi_1(n))}\phi_m(n)
	\Big| \\
	\le
	(\log N)
	\sup_{X \in [1, N]}
	\Big(|S(2X)|\cdot |\phi_m(2X)|+|S(X)|\cdot|\phi_m(X)|+\int_X^{2X} |S(x)| \cdot |\phi_m'(x)| {\: \rm d}x\Big)\\
	\lesssim
	(\log N) \sup_{X\in[1, N]} m^{3/2} \varphi_2(X) \big(\sigma_1(X) \varphi_1(X)\big)^{-1/2} \\
	\le
	m^{3/2}\varphi_2(N) (\log N) \big(\sigma_1(N)\varphi_1(N)\big)^{-1/2}.
\end{multline*}
Therefore,
\begin{align*}
	\abs{I_1}
	\lesssim
	\sum_{m=1}^M m^{1/2} (\log N) \varphi_2(N)\big(\sigma_1(N)\varphi_1(N)\big)^{-1/2} \\
	\lesssim
	M^{3/2} (\log N) \varphi_2(N) \big(\sigma_1(N)\varphi_1(N)\big)^{-1/2}.
\end{align*}

\subsection{The estimates for $I_2$ and $I_3$}
\label{sub:2.2}
We only treat $I_2$ because $I_3$ can be handled by a similar reasoning. By \eqref{eq:15}, \eqref{eq:16} and Lemma
\ref{lem:1} we have
\begin{multline*}
	\sum_{n=1}^N\min\left\{1,  \frac{1}{M\|\varphi_1(n)-\psi(n)\|}\right\}
	\le
	\sum_{m\in\ZZ}
	|b_m|
	\bigg|\sum_{n=1}^N e^{2\pi i m(\varphi_1(n)-\psi(n))} \bigg|\\
	\lesssim
	\frac{N (\log M)}{M}+\bigg(\sum_{0<|m| \le M}\frac{\log M}{M}+\sum_{|m|>M}\frac{M}{|m|^2}\bigg)
	|m|^{1/2} (\log N) \varphi_2(N)\big(\sigma_1(N)\varphi_1(N)\big)^{-1/2} \\
	\lesssim \frac{N (\log M)}{M}+M^{1/2} (\log M) (\log N) \varphi_2(N)\big(\sigma_1(N)\varphi_1(N)\big)^{-1/2}.
\end{multline*}

\subsection{Concluding remarks}
Based on Subsections \ref{sub:2.1} and \ref{sub:2.2} we get
\begin{align*}
	|I_1|+|I_2|+|I_3|
	\lesssim
	\frac{N\log M}{M}+M^{3/2} (\log M) (\log N) \varphi_2(N)\big(\sigma_1(N)\varphi_1(N)\big)^{-1/2}.
\end{align*}
Therefore, by taking $M=N^{1+\delta} (\log N) \varphi_2(N)^{-1}$, we conclude
\begin{align*}
	\abs{I_1} + \abs{I_2} + \abs{I_3}
	& \lesssim
	\varphi_2(N) N^{-\delta} \Big(1+N^{3/2+5\delta/2} (\log N) \sigma_1(N)^{-1/2}\varphi_1(N)^{-1/2}
	\varphi_2(N)^{-3/2}\Big)\\
	& \lesssim
	\varphi_2(N)N^{-\delta} \Big(1+N^{3/2+6\delta/2-\gamma_1/2-3\gamma_2/2}\Big)
\end{align*}
which is bounded by a constant multiple of $\varphi_2(N) N^{-\delta}$ since $3(1-\gamma_2)+(1-\gamma_1)+6\delta<1$.

\section{Proof of Theorem \ref{thm:1}}
Let
\[
	\calF(f)(\xi)=\sum_{n\in\ZZ}f(n)e^{2\pi i \xi n}
\]
denote the Fourier transform on $\ZZ$, and
\[
	\hat{f}(n)=\int_{\TT}f(\xi)e^{-2\pi i \xi n} {\: \rm d} \xi
\]
denote the Fourier transform on $\TT$. For any measure space $X$, let $\calC(X)$ be the space of all continuous
functions on $X$. For $N\in\NN$ we introduce on $\ZZ$ a measure
$\mu_N$ defined
\[
	\mu_N(x)=N^{-1}\sum_{n\in\mathbf B_N}\psi(n)^{-1}\delta_n(x).
\]
Let $T_N: \calC(\mathbf B_N) \rightarrow \calC(\TT)$ be the linear operator given by
\begin{align*}
	T_N(f)=\calF\big(f\mu_N\big).
\end{align*}
We are going to prove the following proposition.
\begin{proposition}
	\label{prop:2}
	For each
	\begin{equation}
		\label{eq:24}
		p \geq 2 + \frac{12 - 12/c_2}{1/c_1 + 3/c_2 - 3}
	\end{equation}
	there is a constant $C_p > 0$ such that for all $N \in \NN$ and
	$f \in L^2\big(\mathbf B_N, \mu_N\big)$
	\[
		\lVert
			T_N f
		\rVert_{L^p(\TT)}
		\leq
		C_p N^{-1/p} \lVert f \rVert_{L^2(\mathbf B_N, \mu_N)}.
	\]
\end{proposition}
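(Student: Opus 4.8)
The plan is to run a Tomas--Stein $TT^*$ argument. By duality the asserted bound $\lVert T_N\rVert_{L^2(\mathbf B_N,\mu_N)\to L^p(\TT)}\le C_pN^{-1/p}$ is equivalent to $\lVert T_NT_N^*\rVert_{L^{p'}(\TT)\to L^p(\TT)}\le C_p^2N^{-2/p}$, and a direct computation identifies $T_NT_N^*$ with convolution on $\TT$ against the kernel
\[
  K_N(\theta)=\sum_{n\in\mathbf B_N}\mu_N(n)e^{2\pi in\theta}=\frac1N\sum_{n\in\mathbf B_N}\psi(n)^{-1}e^{2\pi in\theta},
\]
so it suffices to prove $\lVert g\mapsto g*K_N\rVert_{L^{p'}(\TT)\to L^p(\TT)}\lesssim N^{-2/p}$. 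Using Lemma \ref{equiv} in the form $\psi(n)^{-1}\ind{\mathbf B}(n)=1+\psi(n)^{-1}\big(\Phi(\varphi_1(n)-\psi(n))-\Phi(\varphi_1(n))\big)$ for $1\le n\le N$, I would split $NK_N=D_N+E_N$, where $D_N(\theta)=\sum_{n=1}^Ne^{2\pi in\theta}$ is the (modulated) Dirichlet kernel and $E_N$ collects the sawtooth terms.

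The main term $N^{-1}D_N$ already produces the correct bound for every $p\ge2$: when $p>2$, Young's inequality gives $\lVert g*(N^{-1}D_N)\rVert_{L^p}\le N^{-1}\lVert D_N\rVert_{L^{p/2}(\TT)}\lVert g\rVert_{L^{p'}}\lesssim N^{-1}\cdot N^{1-2/p}\lVert g\rVert_{L^{p'}}=N^{-2/p}\lVert g\rVert_{L^{p'}}$, while at $p=2$ one has $\lVert g*(N^{-1}D_N)\rVert_{L^2}=N^{-1}\lVert\widehat g\cdot\ind{[1,N]}\rVert_{\ell^2}\le N^{-1}\lVert g\rVert_{L^2}$. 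The real work is the error term $N^{-1}E_N$, for which I would prove two estimates: a uniform pointwise bound $\lVert E_N\rVert_{L^\infty(\TT)}\lesssim N^{1-\delta}$ for some $\delta=\delta(c_1,c_2)>0$, and the elementary Fourier-side bound $\lVert\widehat{E_N}\rVert_{\ell^\infty(\ZZ)}\le\sup_{1\le n\le N}\psi(n)^{-1}\lesssim N^{1-1/c_2+\varepsilon}$, valid for every $\varepsilon>0$ (here one uses that $\psi$ is eventually decreasing, by \eqref{eq:5} and the concavity of $\varphi_2$). Granting these, Riesz--Thorin interpolation between the $L^1(\TT)\to L^\infty(\TT)$ bound of norm $\lVert N^{-1}E_N\rVert_{L^\infty}\lesssim N^{-\delta}$ and the $L^2(\TT)\to L^2(\TT)$ bound of norm $\lVert\widehat{N^{-1}E_N}\rVert_{\ell^\infty}\lesssim N^{-1/c_2+\varepsilon}$ gives $\lVert g\mapsto g*(N^{-1}E_N)\rVert_{L^{p'}\to L^p}\lesssim N^{-\delta(1-2/p)}\,N^{(-1/c_2+\varepsilon)2/p}$, and this is $\lesssim N^{-2/p}$ precisely when $p\ge 2+\frac{2(1-1/c_2)+2\varepsilon}{\delta}$; once $\delta$ is pinned down this is exactly the stated threshold $p(c_1,c_2)$.

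The pointwise bound is obtained by repeating the analysis of Section 2, now carrying the extra weight $\psi(n)^{-1}$, applied to
\[
  E_N(\theta)=\sum_{n=1}^N\psi(n)^{-1}\big(\Phi(\varphi_1(n)-\psi(n))-\Phi(\varphi_1(n))\big)e^{2\pi in\theta}.
\]
One expands $\Phi$ into its Fourier series truncated at height $M$, decomposes $E_N$ into the analogues $I_1',I_2',I_3'$ of the three sums handled there, and estimates each by summation by parts against the exponential-sum bounds of Lemma \ref{lem:1} for $\sum_{n\le N}e^{2\pi i(\theta n+m(\varphi_1(n)-l\psi(n)))}$, $l\in\{0,1\}$. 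The new feature relative to \eqref{eq:20} is the weight: there the corresponding sum carried the bounded density $\psi(n)$, whereas here $\psi(n)^{-1}\sim n^{1-1/c_2}$ is unbounded but monotone, with $\sup_{[1,N]}\psi^{-1}\sim N^{1-1/c_2}$ and $\psi(n)^{-1}(e^{2\pi im\psi(n)}-1)=O(|m|)$; these facts keep the summations by parts under control, and choosing $M$ to balance the resulting powers of $M$ and $N$ yields a genuine saving $N^{-\delta}$ with $\delta>0$ exactly when $3(1-1/c_2)+(1-1/c_1)<1$, that is, under the standing hypotheses $c_1\in[1,2)$, $c_2\in[1,6/5)$.

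I expect the third step to be the main obstacle. The three pieces $I_1',I_2',I_3'$ scale differently in $M$ and in $N$, their exponents entangle $1/c_1$, $1/c_2$, and the correction factor $\sigma_1$ that appears when $c_1=1$ (via \eqref{eq:1}), and one has to verify that the hypothesis $1/c_1+3/c_2>3$ is precisely what places the optimal $M$ in the regime where the would-be dominant terms are beaten --- this is what determines the sharp $\delta$, hence the exact range $p\ge p(c_1,c_2)$. By contrast the $D_N$ part and the interpolation are routine once the kernel estimate is available; this is also why the method does not obviously reach the range $2<p<p(c_1,c_2)$.
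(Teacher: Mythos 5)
Your proposal is correct and follows essentially the same architecture as the paper: a $TT^*$ reduction, the kernel split into a Dirichlet main term and a sawtooth error term $E_N$ (this is exactly the paper's split $\calF(\mu_N)=\calF(\nu_N)+\calF(\mu_N-\nu_N)$), and interpolation of the error kernel between an $L^\infty(\TT)$ bound with a power-saving $N^{-\delta}$ and an $\ell^\infty(\ZZ)$ Fourier-coefficient bound of order $\psi(N)^{-1}/N\simeq\varphi_2(N)^{-1}$. Your handling of the Dirichlet piece via Young's inequality and $\lVert D_N\rVert_{L^{p/2}(\TT)}\lesssim N^{1-2/p}$ (with the separate $p=2$ case) is equivalent to the paper's Riesz--Thorin interpolation of the $L^2\to L^2$ and $L^1\to L^\infty$ endpoints.

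The one genuine divergence is in how the crucial kernel estimate $\lVert E_N\rVert_{L^\infty(\TT)}\lesssim N^{1-\delta}$ is proved. You propose to re-run the Section~2 exponential-sum machinery from scratch with the unbounded weight $\psi(n)^{-1}$ carried through the sawtooth decomposition $I_1',I_2',I_3'$. This does work: the identities $\psi(n)^{-1}\bigl(e^{2\pi im\psi(n)}-1\bigr)=O(|m|)$, the derivative bound $\bigl|\tfrac{d}{dx}\bigl[\psi(x)^{-1}(e^{2\pi im\psi(x)}-1)\bigr]\bigr|\lesssim |m|/x$, and $\psi(n)^{-1}\simeq n/\varphi_2(n)$ (increasing) let the summations by parts go through, and the $m=0$ contribution to $I_2',I_3'$ becomes $\lesssim N^2\log M/(M\varphi_2(N))$ rather than $N\log M/M$, which after optimizing $M$ still lands within the hypothesis $3(1-\gamma_2)+(1-\gamma_1)<1$. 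However, the paper's Lemma~\ref{lem:2} gets the same estimate with essentially no new exponential-sum work: a single Abel summation against the weight $\psi(n)^{-1}$ reduces the weighted kernel estimate to the already-established unweighted asymptotic \eqref{eq:20} (using $|\psi(n)^{-1}-\psi(n+1)^{-1}|\lesssim\varphi_2(n)^{-1}$). Your route is sound but more laborious, and you should beware that the three pieces $I_1',I_2',I_3'$ no longer scale as in Section~2 once the weight is inserted --- in particular $I_1'$ loses the factor $\varphi_2(N)$ while $I_2',I_3'$ gain $N/\varphi_2(N)$ --- so the optimization of $M$ must be redone from scratch, which you flag but do not carry out.
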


Before embarking on the proof we show the following.
\begin{lemma}
	\label{lem:2}
		For every $\delta > 0$ satisfying $(1-\gamma_1)+3(1-\gamma_2) + 6 \delta < 1$ there is $\delta' > 0$
		such that
		\begin{align*}
			\sum_{n \in \mathbf B_N} \psi(n)^{-1} e^{2\pi i\xi n}
			=
			\sum_{n = 1}^N e^{2\pi i\xi n}
			+
			\calO\big(N^{1 - \delta - \delta'}\big).
		\end{align*}
		The implied constant is independent of $\xi$ and $N$.
\end{lemma}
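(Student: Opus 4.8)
The plan is to follow essentially the same scheme used to prove \eqref{eq:20}, but now tracking the extra weight $\psi(n)^{-1}$. Using Lemma \ref{equiv} together with the sawtooth identity
\[
	\lfloor \varphi_1(n) \rfloor - \lfloor \varphi_1(n) - \psi(n) \rfloor
	= \psi(n) + \Phi\big(\varphi_1(n) - \psi(n)\big) - \Phi\big(\varphi_1(n)\big),
\]
I multiply through by $\psi(n)^{-1}$ to obtain, for $n \in \mathbf B_N$, that $\psi(n)^{-1} \cdot \mathds 1_{\mathbf B}(n) = 1 + \psi(n)^{-1}\big(\Phi(\varphi_1(n) - \psi(n)) - \Phi(\varphi_1(n))\big)$. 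Summing against $e^{2\pi i \xi n}$ gives
\[
	\sum_{n \in \mathbf B_N} \psi(n)^{-1} e^{2\pi i \xi n}
	= \sum_{n=1}^N e^{2\pi i \xi n}
	+ \sum_{n=1}^N \psi(n)^{-1}\big(\Phi(\varphi_1(n) - \psi(n)) - \Phi(\varphi_1(n))\big) e^{2\pi i \xi n},
\]
so everything reduces to showing the second sum is $\calO(N^{1 - \delta - \delta'})$. The new feature compared to \eqref{eq:20} is the factor $\psi(n)^{-1}$; since $\psi(x) \sim \varphi_2'(x)$ and $\varphi_2(x) = x L_{\varphi_2}(x)$ with $L_{\varphi_2}$ slowly varying, we have $\psi(x)^{-1} \simeq x/\varphi_2(x) = L_{\varphi_2}(x)^{-1}$, which is itself slowly varying; and $\psi'(x)^{-1}$-type derivative bounds are controlled analogously, so the weight is harmless up to factors of the shape $N^{\varepsilon}$, which can be absorbed into $\delta'$.

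Next I repeat the Fourier-expansion of $\Phi$: write $\Phi(x) = \sum_{0 < |m| \le M} \frac{1}{2\pi i m} e^{-2\pi i m x} + \calO(\min\{1, (M\|x\|)^{-1}\})$ and expand $\min\{1,(M\|x\|)^{-1}\}$ as in \eqref{eq:15}--\eqref{eq:16}. This decomposes the error sum into three pieces $\tilde I_1, \tilde I_2, \tilde I_3$ exactly paralleling $I_1, I_2, I_3$, except each inner sum now carries the smooth weight $\psi(n)^{-1}(e^{2\pi i m\psi(n)} - 1)$ in place of $e^{2\pi i m\psi(n)} - 1$ (for $\tilde I_1$) and $\psi(n)^{-1}$ in place of $1$ (for $\tilde I_2,\tilde I_3$). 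For $\tilde I_1$ I set $\tilde\phi_m(x) = \psi(x)^{-1}(e^{2\pi i m\psi(x)} - 1)$; since $|e^{iu}-1| \lesssim |u|$ one checks $|\tilde\phi_m(x)| \lesssim m$ and $|\tilde\phi_m'(x)| \lesssim m|\psi'(x)/\psi(x)| \lesssim m x^{-1}$, which are even better than the bounds on $\phi_m$ used in Subsection \ref{sub:2.1} (they lose only the factor $\varphi_2(x)/x$, gained back). Summation by parts against Lemma \ref{lem:1} then yields $|\tilde I_1| \lesssim M^{3/2}(\log N)^2 N (\sigma_1(N)\varphi_1(N))^{-1/2}$ or better. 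For $\tilde I_2$ and $\tilde I_3$ I use $|\psi(n)^{-1}| \lesssim N^{\varepsilon}$ on $[1,N]$ together with the coefficient bounds $|b_m|$ and Lemma \ref{lem:1}, obtaining $|\tilde I_2| + |\tilde I_3| \lesssim N^{1+\varepsilon}(\log M)/M + M^{1/2} N^{1+\varepsilon}(\log M)(\log N)(\sigma_1(N)\varphi_1(N))^{-1/2}$.

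Finally I optimize in $M$. Choosing $M = N^{1 + \delta}$ (up to logarithmic and $N^{\varepsilon}$ corrections) balances the two contributions and gives a total bound of the form $N^{1-\delta}\big(1 + N^{3/2 + 6\delta/2 - \gamma_1/2 - 3\gamma_2/2 + \varepsilon}\big)$, which under the hypothesis $(1-\gamma_1) + 3(1-\gamma_2) + 6\delta < 1$ is $\calO(N^{1 - \delta - \delta'})$ for some $\delta' > 0$ after choosing $\varepsilon$ small enough; this is where the precise arithmetic of the exponent and the need for a strictly positive $\delta'$ come in. The main obstacle is bookkeeping: keeping the slowly varying factors $L_{\varphi_2}(N)^{\pm 1}$, $\sigma_1(N)^{-1/2}$ and the various $\log$'s from degrading the power saving, and verifying that the derivative estimates on $\psi^{-1}$ follow cleanly from \eqref{eq:5} and Definitions \ref{def:1}--\ref{def:2}; there is no genuinely new analytic input beyond what was already developed for \eqref{eq:20}.
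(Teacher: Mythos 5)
Your approach is genuinely different from the paper's. The paper does not re-run the sawtooth/exponential-sum machinery with the weight; instead it performs Abel summation on both $\sum_{n\in\mathbf B_N}\psi(n)^{-1}e^{2\pi i\xi n}$ and $\sum_{n=1}^N e^{2\pi i\xi n}$ to express their difference entirely in terms of $S_n(\xi)-\sum_{k\le n}\psi(k)e^{2\pi i\xi k}$, which is already controlled by \eqref{eq:20}; the only new ingredient is the estimate $\lvert\psi(n)^{-1}-\psi(n+1)^{-1}\rvert\lesssim\varphi_2(n)^{-1}$. Your plan of re-deriving the sawtooth decomposition directly with the weight $\psi(n)^{-1}$ is viable in principle, but as executed it has a genuine gap.

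The central error is the claim that $\psi(x)^{-1}\simeq L_{\varphi_2}(x)^{-1}$ is slowly varying, hence $\psi(n)^{-1}\lesssim N^{\varepsilon}$ on $[1,N]$. This is only true for $c_2=1$. For $c_2\in(1,6/5)$ one has $\varphi_2(x)\simeq x^{\gamma_2}\tilde L(x)$ with $\gamma_2=1/c_2<1$, so $\psi(x)^{-1}\simeq x/\varphi_2(x)\simeq x^{1-\gamma_2}/\tilde L(x)$, which grows polynomially (up to $N^{1/6-}$). Since the Lemma is needed for the full range $c_2\in[1,6/5)$ (it feeds into Theorem~\ref{thm:2}), this cannot be waved away. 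The correct bound is $\psi(n)^{-1}\lesssim N/\varphi_2(N)$, and that factor must be carried through $\tilde I_2,\tilde I_3$; the first term of their bound should be $\simeq N^2\log M/(M\varphi_2(N))$, not $N^{1+\varepsilon}\log M/M$.

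A second, independent problem is the choice $M=N^{1+\delta}$ (``up to $N^{\varepsilon}$ corrections''). With your own estimate $\lvert\tilde I_1\rvert\lesssim M^{3/2}N(\log N)^2(\sigma_1(N)\varphi_1(N))^{-1/2}$ and $\varphi_1(N)\simeq N^{\gamma_1}$ with $\gamma_1\le1$, the choice $M=N^{1+\delta}$ gives $\lvert\tilde I_1\rvert\gtrsim N^{5/2+3\delta/2-\gamma_1/2}\gtrsim N^2$, wildly exceeding the target $N^{1-\delta-\delta'}$. The correct choice, as in the Concluding Remarks after \eqref{eq:20}, is $M\simeq N^{1+\delta}\varphi_2(N)^{-1}$ (i.e.\ $M\simeq N^{1+\delta-\gamma_2}$ up to slowly varying factors), and the factor $\varphi_2(N)^{-1}$ is emphatically not an ``$N^{\varepsilon}$ correction'' when $\gamma_2<1$. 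Consequently the final bound you quote, $N^{1-\delta}\bigl(1+N^{3/2+6\delta/2-\gamma_1/2-3\gamma_2/2+\varepsilon}\bigr)$, does not follow from your intermediate estimates with your choice of $M$. With the corrected weight bound $\psi(n)^{-1}\lesssim N/\varphi_2(N)$ and the corrected choice $M\simeq N^{1+\delta+\eta}\varphi_2(N)^{-1}$ for a small auxiliary $\eta>0$, the exponent arithmetic does close and yields the target $N^{1-\delta-\delta'}$, essentially because multiplying both the error term and the main term of \eqref{eq:20} by $N/\varphi_2(N)$ is self-consistent; but that is a repair of your argument, not what you wrote.
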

\begin{proof}
	For $N \in \NN$ and $\xi \in \TT$ we set
	\[
		S_N(\xi) = \sum_{k \in \mathbf B_N} e^{2\pi i \xi k}.
	\]
	Then, by the summation by parts we have
	\begin{align}
		\nonumber
		\sum_{n \in \mathbf B_N} \psi(n)^{-1} e^{2\pi i \xi n}
		&=
		\sum_{n = 1}^N \psi(n)^{-1} \big(S_n(\xi) - S_{n-1}(\xi)\big)\\
		\label{eq:14}
		&=
		\psi(N+1)^{-1} S_N(\xi)
		+
		\sum_{n = 1}^N \big(\psi(n)^{-1} - \psi(n+1)^{-1}\big) S_n(\xi).
	\end{align}
	Similarly, we may write
	\begin{equation}
		\label{eq:23}
		\sum_{n = 1}^N e^{2\pi i \xi n}
		=
		\psi(N+1)^{-1} \sum_{n = 1}^N \psi(n) e^{2\pi i \xi n}
		+ \sum_{n = 1}^N \big(\psi(n)^{-1} - \psi(n+1)^{-1}\big) \sum_{k = 1}^n \psi(k) e^{2\pi i \xi k} .
	\end{equation}
	Thus, subtracting \eqref{eq:23} from \eqref{eq:14} we may estimate
	\begin{multline*}
		\Big\lvert
		\sum_{n \in \mathbf B_N} \psi(n)^{-1} e^{2\pi i \xi n}
		-
		\sum_{n = 1}^N e^{2\pi i \xi n}
		\Big\rvert
		\leq
		\sum_{n = 1}^N \big\lvert \psi(n)^{-1} - \psi(n+1)^{-1} \big\rvert
		\cdot
		\Big\lvert
		S_n(\xi) - \sum_{k=1}^n \psi(k) e^{2\pi i \xi k}
		\Big\rvert\\
		+
		\psi(N+1)^{-1}
		\Big\lvert
		S_N(\xi) - \sum_{k=1}^N \psi(k) e^{2\pi i \xi k}
		\Big\rvert.
	\end{multline*}
	By \eqref{eq:20}, for any $\delta > 0$ satisfying $3(1-\gamma_1) + (1 - \gamma_2) + 6 \delta < 1$ there
	is $\delta' > 0$  such that for all $n \in \NN$
	\[
		\Big\lvert
		S_n(\xi) - \sum_{k = 1}^n \psi(k) e^{2 \pi i \xi k}
		\Big\rvert
		\leq
		C \varphi_2(n) n^{-\delta - \delta'}.
	\]
	Using \eqref{eq:5} together with \cite[Lemma 2.14]{m1} we obtain
	\[
		\psi'(n) \lesssim \varphi_2''(n) \lesssim \frac{\varphi_2(n)}{n^2}.
	\]
	Therefore, again by \eqref{eq:5} and the monotonicity of $\varphi_2$ we get
	\[
		\big\lvert
		\psi(n)^{-1} - \psi(n+1)^{-1}
		\big\rvert
		\lesssim
		\sup_{t \in [n, n +1]}
		\big\lvert
		\psi(t)^{-2} \psi'(t)
		\big\rvert
		\lesssim
		\varphi_2(n)^{-1}.
	\]
	Hence,
	\[
		\sum_{n = 1}^N \big\lvert \psi(n)^{-1} - \psi(n+1)^{-1}\big\rvert
		\cdot
		\Big\lvert
		S_n(\xi) - \sum_{k=1}^n \psi(k) e^{2\pi i \xi k}
		\Big\rvert
		\lesssim
		\sum_{n = 1}^N n^{-\delta - \delta'}
		\lesssim
		N^{1- \delta- \delta'}. \qedhere
	\]
\end{proof}

\begin{proof} [Proof of Proposition \ref{prop:2}]
  The $TT^*$ argument will be critical in the proof. Firstly, let us calculate $T_N^*$. By Plancherel's theorem we have
	\begin{align*}
		\langle T_N f, g\rangle_{L^2(\TT)}
		= \int_{\TT}\calF\big(f\mu_N\big)(\xi) \overline{g(\xi)} {\: \rm d}\xi
		= \sum_{n\in\ZZ} f(n)\overline{\hat{g}(n)}\mu_N(n)
		= \langle f, T_N^*g \rangle_{L^2(\mathbf B_N, \mu_N)}.
	\end{align*}
	Therefore, the adjoint operator $T_N^*: \calC(\TT)^* \rightarrow \calC(\mathbf B_N)^*
	=\calC(\mathbf B_N)$ is given by
	\begin{align*}
		T_N^*(g)=\hat{g} \cdot \ind{\mathbf B_N},
	\end{align*}
	and consequently, $T_N T_N^*: \calC(\TT)^* \rightarrow \calC(\TT)^*$ may be written as
	\begin{align*}
		T_N T_N^*f=f*\calF(\mu_N).
	\end{align*}
	Let us observe that it is enough to show
	\begin{align}
		\label{eq:12}
		\lVert T_N T_N^* \rVert_{L^{p'}(\TT) \rightarrow L^p(\TT)} \leq C_p N^{-2/p}.
	\end{align}
	Indeed, for $f \in L^2\big(\mathbf B_N, \mu_N\big)$ and $g \in L^{p'}(\TT)$ we have
	\[
		|\langle T_N f, g \rangle_{L^2(\TT)}|
		=
		|\langle f, T_N^* g \rangle_{L^2(\mathbf B_N, \mu_N)}|
		\leq
		\lVert f \rVert_{L^2(\mathbf B_N, \mu_N)}
		\lVert T_N^* g \rVert_{L^2(\mathbf B_N, \mu_N)}
	\]
	and since
	\[
		\lVert T_N^* g \rVert_{L^2(\mathbf B_N, \mu_N)}^2
		=
		\langle T_N T_N^* g, g \rangle_{L^2(\TT)}
		\leq
		\lVert T_N T_N^* \rVert_{L^{p'}(\TT) \rightarrow L^p(\TT)}
		\lVert g \rVert_{L^{p'}(\TT)}^2,
	\]
	we obtain
	\[
		\lVert T_N f \rVert_{L^p(\TT)}
		\leq
		\lVert T_N T_N^* \rVert_{L^{p'}(\TT) \rightarrow L^p(\TT)}^{1/2}
		\lVert f \rVert_{L^2(\mathbf B_N, \mu_N)}.
	\]
	For the proof of \eqref{eq:12},  for $N\in\NN$, let us introduce an auxiliary measure $\nu_N$ on $\ZZ$ and the corresponding linear operator
	$S_N: \calC(\NN_N) \rightarrow \calC(\TT)$, by setting
	\[
		\nu_N(x) = N^{-1}\sum_{n \in \NN_N} \delta_n(x),
	\]
	and
	\[
		S_N f = \calF(f \nu_N).
	\]
Here, $\mathbb{N}_N := \mathbb{N} \cap [1,N]$.
	Reasoning similar to the above applied to the operator $S_N$ leads to
	\begin{align*}
		S_N S_N^*f=f*\calF(\nu_N).
	\end{align*}
	Since $L^p(\TT)$ can be embedded into $\calC(\TT)^*$ for any $p \ge 1$ we may consider the operators
	$T_N T_N^*$ and $S_N S_N^*$ as mappings on $L^p(\TT)$ spaces.
	Next, we write
 	\begin{align*}
   		\big\lVert
		T_N T_N^*f
		\big\rVert_{L^{p}(\TT)}&
		=
		\big\lVert
		f*\calF(\mu_N)
		\big\rVert_{L^{p}(\TT)}\\
		& \leq
		\big\lVert
		f * \calF(\nu_N)
		\big\rVert_{L^{p}(\TT)}
		+
   		\big\lVert
		f*\calF(\mu_N-\nu_N)
		\big\rVert_{L^{p}(\TT)}.
	\end{align*}
	We are going to show that for each $p$ satisfying \eqref{eq:24} there is $C_p > 0$ such that
	\begin{align}
		\label{eq:26}
		\big\lVert
		f * \calF(\nu_N)
		\big\rVert_{L^{p}(\TT)}
		&\leq
		C_p
		N^{-2/p}
		\lVert f \rVert_{L^{p'}(\TT)},\\
		\label{eq:33}
		\big\lVert
		f * \calF(\mu_N - \nu_N)
		\big\rVert_{L^p(\TT)}
		&\leq
		C_p
		N^{-2/p}
		\lVert f \rVert_{L^{p'}(\TT)}
	\end{align}
	for all $f \in L^{p'}(\TT)$. We start by proving \eqref{eq:26} for $p = 2$. By Plancherel's theorem we have
	\begin{align*}
		\big\lVert
		f *\calF(\nu_N)
		\big\rVert_{L^2(\TT)}
		=
		\big\lVert
		\hat{f} \nu_N
		\big\rVert_{\ell^2(\ZZ)}
		& \leq
		\lVert \nu_N \rVert_{\ell^\infty(\ZZ)} \lVert f \rVert_{L^2(\TT)} \\
		& \leq
		N^{-1}
		\lVert f \rVert_{L^2(\TT)}.
	\end{align*}
	On the other hand, for $p = \infty$ we may write
	\begin{align*}
		\lVert
		f * \calF(\nu_N)
		\rVert_{L^\infty(\TT)}
		\leq
		\lVert \calF(\nu_N) \rVert_{L^\infty(\TT)}
		\lVert f \rVert_{L^1(\TT)}		
\leq
		\lVert f \rVert_{L^1(\TT)}.
	\end{align*}	

Therefore, for $p \geq 2$ we use  Riesz--Thorin interpolation theorem to obtain \eqref{eq:26}.
	To show \eqref{eq:33}, we apply analogous reasoning. Firstly, by Plancherel's theorem we have
	\begin{align*}
		\big\lVert
		f * \calF(\mu_N - \nu_N)
		\big\rVert_{L^2(\TT)}
		=
		\big\lVert
		\hat{f}
		\big(\mu_N - \nu_N\big)
		\big\rVert_{L^2(\TT)}
		& \leq
		\big\lVert
		\mu_N - \nu_N
		\big\rVert_{\ell^\infty(\ZZ)}
		\lVert f \rVert_{L^2(\TT)}\\
		& \leq
		\varphi_2(N)^{-1} \lVert f \rVert_{L^2(\TT)}.
	\end{align*}
	Secondly, for $p = \infty$ we get
	\begin{align*}
		\lVert
		f * \calF(\mu_N - \nu_N)
		\rVert_{L^\infty(\TT)}
		& \leq
		\lVert \calF(\mu_N) - \calF(\nu_N) \rVert_{L^\infty(\TT)}
		\lVert f \rVert_{L^1(\TT)}\\
		& \leq
		N^{-\delta - \delta'}
		\lVert f \rVert_{L^1(\TT)}
	\end{align*}
	where in the last estimate we have used Lemma \ref{lem:2}. Thus, again by Riesz--Thorin interpolation theorem,
	for $p \geq 2$ we get
	\begin{align*}
		\big\lVert
		f*\calF(\mu_N-\nu_N)
		\big\rVert_{L^{p}(\TT)}
	 	& \leq
		\big\lVert \mu_N-\nu_N \big\rVert_{\ell^{\infty}(\ZZ)}^{2/p}
		\cdot
		\big\lVert \calF(\mu_N-\nu_N) \big\rVert_{L^{\infty}(\TT)}^{1-2/p}
		\cdot
		\|f\|_{L^{p'}(\TT)}\\
		&\lesssim
		\varphi_2(N)^{-2/p} N^{-(\delta+\delta')(1-2/p)}
		\|f\|_{L^{p'}(\TT)}.
	\end{align*}
	Let us recall that for any $\varepsilon > 0$
	\[
		\varphi_2(N) \gtrsim_\varepsilon N^{\gamma_2 - \varepsilon}
	\]
	Therefore, for the inequality \eqref{eq:33} to hold true, we
        need to have $\varepsilon>0$ and $p>2$ to satisfy
	\[
		-2 (\gamma_2 - \varepsilon)/p - (\delta + \delta')(1-2/p) \leq -2/p.
	\]
	Hence,
	\[
		\varepsilon \leq -(1-\gamma_2) + (\delta + \delta')(p-2)/2.
	\]
	Because the right hand side has to be positive, we obtain the condition
	\[
		(\delta + \delta')(p-2)/2 - (1-\gamma_2) > 0,
	\]
	which is equivalent to
	\[
		p > 2 + 2(1-\gamma_2)/(\delta + \delta').
	\]
	Since $3(1-\gamma_2) + (1-\gamma_1) + 6 \delta < 1$ we conclude
	\[
		p \geq 2 + \frac{12(1-\gamma_2)}{\gamma_1 + 3 \gamma_2 - 3}. \qedhere
	\]
\end{proof}
Next, we show Theorem \ref{thm:1} and Theorem \ref{thm:2}.
\begin{proof}[Proof of Theorem \ref{thm:1} and Theorem \ref{thm:2}]
	Let $(a_n : n \in \NN)$ be a sequence of complex numbers such that $\sup_{n \in \NN} |a_n| \le 1$.
	Using Proposition \ref{prop:2} with $f(n) = a_n \psi(n)$ we get
	\begin{align*}
		\int_{\TT}
		\Big|
		\sum_{n\in\mathbf{B}_{N}}f(n)\psi(n)^{-1} e^{2\pi i \xi n}
		\Big|^p
		{\: \rm d}\xi
		\lesssim_p
		N^{p/2-1}
		\Big(\sum_{n \in \mathbf{B}_{N}}
		| f(n) |^2 \psi(n)^{-1}
		\Big)^{p/2},
	\end{align*}
	thus, by \eqref{eq:32},
	\begin{align*}
		\int_{\TT}
		\Big|
		\sum_{n \in \mathbf{B}_{N}}
		a_n\ e^{2\pi i \xi n}
		\Big|^p
		{\: \rm d}\xi
		\lesssim_p
		N^{p/2-1}
		\Big(\sum_{n \in\mathbf{B}_{N}} \psi(n) \bigg)^{p/2}
		\lesssim_p
		N^{-1} \varphi_2(N)^p.
	\end{align*}
	Finally, we may estimate
	\begin{align*}
  		\int_{\TT}
		\Big| \sum_{n\in\mathbf{B}_{N}}e^{2\pi i \xi n} \Big|^p {\: \rm d}\xi
		\gtrsim
		\int_{|\xi|\le 1/(100N)}
		\Big| \sum_{n\in\mathbf{B}_{N}}e^{2\pi i \xi n}\Big|^p {\: \rm d}\xi
		\gtrsim
		N^{-1}\varphi_2(N)^p.
	\end{align*}
	This completes the proof.
\end{proof}

\begin{bibliography}{discrete}
	\bibliographystyle{amsplain}
\end{bibliography}

\end{document}